\documentclass[11pt,letterpaper]{article}
\usepackage{settings}

\title{Improved Nonnegativity Testing in the Bernstein Basis \\ via Geometric Means}
\author{Mitchell Tong Harris \\ \href{mitchh@mit.edu}{mitchh@mit.edu}  \and Pablo A. Parrilo \\ \href{parrilo@mit.edu}{parrilo@mit.edu}}

\begin{document}

\maketitle

\makeatletter
\def\blfootnote{\gdef\@thefnmark{}\@footnotetext}
\makeatother
\blfootnote{The authors are with the Laboratory for Information and Decision Systems (LIDS), Massachusetts Institute of Technology, Cambridge MA 02139. The first author was supported by the National Science Foundation Graduate Research Fellowship under Grant No. 2141064.}

\begin{abstract}
We develop a new kind of nonnegativity certificate for univariate polynomials on an interval. In many applications, nonnegative Bernstein coefficients are often used as a simple way of certifying polynomial nonnegativity. Our proposed condition is instead an explicit lower bound for each Bernstein coefficient in terms of the geometric mean of its adjacent coefficients, which is provably less restrictive than the usual test based on nonnegative coefficients. We generalize to matrix-valued polynomials of arbitrary degree, and we provide numerical experiments suggesting the practical benefits of this condition. The techniques for constructing this inexpensive certificate could potentially be applied to other semialgebraic feasibility problems.
\end{abstract}

\section{Introduction}
\label{sec:intro}
The cubic Bernstein polynomials (e.g., \cite{farouki2012bernstein, lorentz2012bernstein}) are 
\[ b_0(x) = (1-x)^3, \qquad b_1(x) = 3x(1-x)^2, \qquad b_2(x) = 3x^2(1-x), \qquad b_3(x) = x^3.
\]
Suppose we have a polynomial $p(x)$ such that
\begin{equation}\label{eqn:cubic}
    p(x) = p_0 b_0(x) + p_1b_1(x) + p_2b_2(x) + p_3b_3(x).
\end{equation}
We would like to find explicit conditions on the real numbers $p_0, p_1,p_2,p_3$ (called Bernstein coefficients) that guarantee $p(x) \geq 0$ on $[0,1]$. This task and its higher degree variants discussed in Section~\ref{sec:generalization} are central questions in applied mathematics. Research on nonnegativity certificates of different kinds is a classical topic in real algebraic geometry, including well-known work by Sturm, Hilbert, Artin, and others; see e.g.~\cite{BPRbook, powers2021certificates} and the references therein. More recently, concrete connections to applications in statistics, control, and optimization have heightened interest in finding computation-friendly nonnegativity certificates \cite{parrilo2000structured, papp2011optimization,ahmadi2016some}.
Existing conditions that have been used in applications as nonnegativity certificates of \eqref{eqn:cubic} 
 on intervals can roughly be classified as follows:
\begin{enumerate}
\item \textbf{Exact characterizations:} The Markov–Luk\'acs Theorem (\cite[p.\ 4]{szeg1939orthogonal}) gives a necessary and sufficient condition for nonnegativity on an interval. It states that a polynomial $p$ is nonnegative on $[0,1]$ if and only if there exist polynomials $s_1$ and $s_2$ such that
\[p(x) = x \, s_1(x)^2 + (1-x) \, s_2(x)^2.\] 
In the cubic case, the polynomials $s_1$ and $s_2$ are linear, so writing this condition in the language of Bernstein polynomials along with a characterization of polynomials that are sums of squares (SOS) yields the second-order cone (SOCP) condition in Lemma~\ref{lem:nonnegative_socp}. While this condition has fixed computational cost and is necessary and sufficient for a polynomial to be nonnegative on the interval, it is nontrivial to verify -- one has to search for the linear polynomials~$s_1$ and~$s_2$, typically via convex optimization.

Alternatively, an exact characterization can be given in terms of the discriminant of $p$. We elaborate on this approach in Section~\ref{sec:cubic-exact}. As mentioned in \cite{schmidt1988positivity}, conditions of this type on $p$ are typically more complicated to understand and to compute, particularly for higher degrees.

\item \textbf{Sufficient conditions:} A simple, sufficient condition for nonnegativity of the polynomial $p$ is $p_0,p_1,p_2,p_3 \geq 0$. Because each of the Bernstein polynomials is nonnegative on the interval, a nonnegative combination of them is nonnegative too. This idea was mentioned by Bernstein~\cite{bernshtein1915representation} in 1915, expanded in 1966 by~\cite{cargo1966bernstein}, and further discussed more recently in~\cite{lin1995methods}. The downside to this condition is that it is potentially conservative -- there are plenty of polynomials nonnegative on $[0,1]$ that have at least one negative Bernstein coefficient.
\end{enumerate}

We give names to the sets of polynomials that satisfy each of these conditions.
\begin{defn} $\mathcal{P}$ (``Positive'') is the set of cubic polynomials such that $p(x) \geq 0$ on $[0,1]$.
\end{defn}
\begin{defn} $\mathcal{NB}$ (``Nonnegative Bernstein'') is the set of cubic polynomials whose Bernstein coefficients are all nonnegative.
\end{defn} 

These conditions reflect two extremes in the tradeoff space between cost and quality of approximation. On the one hand, $\mathcal{P}$ is the full set, but testing membership is nontrivial. On the other hand, $\mathcal{NB}$ is a smaller set for which checking membership is easy. The purpose of this paper is to introduce a set that lies in between $\mathcal{NB}$ and $\mathcal{P}$, a set that better trades off accuracy for computational cost. In doing so, we highlight a technique for generating new sufficient conditions for nonnegativity based on making an explicit data-dependent choice of feasible solutions in the SOS characterizations.

In the cubic case, these conditions become the explicit set of inequalities $p_0, p_3 \geq 0$ and \begin{equation}
    p_2 \geq -2\sqrt{\frac{p_3\max(p_1,0)}{3}} \qquad \text{and} \qquad p_1 \geq -2\sqrt{\frac{p_0\max(p_2,0)}{3}}.
    \end{equation}
These inequalities imply nonnegativity of $p$ on the interval without requiring solving a program with additional decision variables. 
This new condition, and its higher degree generalizations, more gracefully balance the tradeoffs between efficiency and approximation power. We show how these fast-to-check, explicit conditions naturally generalize to arbitrary degree (Section 4) and polynomial matrices (Section 5); see Definition~\ref{def:GBmatrix} and Theorem~\ref{thm:arbitrary_general_inclusions}. Finally, we illustrate how using them in the right context may have practical benefits.

\subsection{Related work}

There is a long history of work related to nonnegativity certificates. Some of the oldest results are due to Descartes, Sylvester, Sturm, Budan and Fourier (see, for instance, \cite{bochnack2013real, BPRbook, anderson1998descartes}).
In the multivariate case, Hilbert's 1888 paper on sums of squares \cite{hilbert1888ueber} again revitalized the area. Ever since, there has been consistent work on characterizing large classes of nonnegative polynomials \cite{powers2011positive,powers2021certificates}. Due the intrinsic computational difficulty of the problem, to varying extents all these methods trade off conservativeness for computational ease. 

Bernstein's original 1915 work \cite{bernshtein1915representation} 
provided a sufficient condition for nonnegativity on the unit interval. This condition is exceptionally easy to test, and within about fifty years a higher dimensional version was adopted for applications in the context of computer aided design. The higher dimension version is the familiar Bézier curve \cite{de1963courbes, bezier1966definition, bezier1967definition}, which is a parametric curve such that each coordinate is a univariate Bernstein polynomial. Bernstein's sufficient condition in one dimension is exactly the defining condition of $\mathcal{NB}$; it can be generalized to the containment of a B\'ezier curve in the polytope defined by the convex hull of the Bernstein coefficient vectors~\cite{biran2018geometry}. This condition is tractable enough that it has been used in applications ranging from geometric constraints systems \cite{foufou2012bernstein} to problems of robust stability \cite{garloff2000application, garloff1997speeding,malan1992b,zettler1998robustness}. 

On the other hand, sum of squares nonnegativity certificates were not too practical until the development of further connections to tractable optimization problems. Sum of squares polynomials became useful for applications when Shor \cite{shor1987class}, Nesterov \cite{nesterov2000squared}, Parrilo \cite{parrilo2000structured} and Lasserre \cite{lasserre2001global} used convex optimization and semidefinite programming to algorithmically check this property. Sum of squares techniques allow a more precise tuning of the computational costs -- the more conservative one wants to be, the more expensive the method becomes. Further work built on this,  by restricting to subsets of the set of sum of squares for which checking membership is more tractable. One proposal is to look for easier-to-check certificates; e.g., instead of requiring the associated Gram matrix to be positive semidefinite, one can use the stronger condition of diagonal dominance or scaled diagonal dominance~\cite{ahmadi2019dsos}. 

Other classes of nonnegativity certificates have also been explored, with an eye towards potential computational benefits.  A thread of work that goes back to Reznick \cite{reznick1989forms}, uses a decomposition of a polynomial as a sum whose terms are nonnegative due to the AM-GM inequality. This idea, combined with relative entropy optimization~\cite{chandrasekaran2016relative} has been used to generate a variety of new nonnegativity conditions including those presented in \cite{fidalgo2011positive,chandrasekaran2016relative,iliman2016amoebas}. In general, these sets are incomparable with the set of sum of squares, so they exemplify another compromise between conservativeness and computational ease. These approaches have a similar flavor to our proposal; however, checking the membership condition of the existing methods still requires the solution of some program, so they are generally more expensive than checking Bernstein's condition. Because our conditions are explicit, and do not require solving any optimization programs, they sit closer to the cheaper, less explored end of the tradeoff spectrum. 

\section{Preliminaries} \label{sec:preliminaries-parent}
\subsection{Notation}
We use $\mathbb{R}_d[x]$ to denote the vector space of univariate polynomials of degree less than or equal to $d$. Given a set $S$, let $S^\circ$ be the interior of the set $S$.

While the paper begins with a focus on univariate polynomials, we eventually generalize our results to polynomial matrices in Section~\ref{sec:polynomial-matrices}. In that case, the analogue of being pointwise nonnegative is to be pointwise positive semidefinite. Let $\mathcal{P}_{d}^{n}$ denote the set of symmetric $n \times n$ univariate polynomial matrices of degree $d$ with nonnegative eigenvalues on the unit interval. 
For example, $\mathcal{P}_3^1$ is the set of cubic polynomials nonnegative on $[0,1]$. We use the same subscript/superscript convention for other sets such as in $\mathcal{NB}_3^1$.
If we drop the dimension superscript or the degree subscript it should be implied from context.

If $x \in \mathbb{R}$, let $x_+ = \max(0, x)$. Let $X, Y \in S^n$ be $n \times n$ symmetric matrices. Let $X_+$ be the orthogonal projection of $X$ onto the positive semidefinite cone. We write $X \succeq Y$ if $X-Y$ is positive semidefinite and $X \succ Y$ if $X-Y$ is positive definite. If $X$ is positive semidefinite, then $X^{\frac{1}{2}}$ is its matrix square root, which is the unique positive semidefinite matrix $Z$ such that $Z^2 = X$. Both the projection and the square root can be computed with eigenvalue decompositions; a review of the algorithms are given in Appendix~\ref{sec:preliminaries-proofs}.

\subsection{Second order cones and Bernstein polynomials}\label{sec:preliminaries}

The two main ingredients in our method are Bernstein polynomials and second order cones. We define both before presenting the second order cone condition for nonnegativity of a cubic Bernstein polynomial.

\begin{defn}
The three-dimensional \emph{rotated second order cone} $\mathcal{Q} \subset \mathbb{R}^3$ is the set
\begin{equation}\label{eqn:definingQ}
    \mathcal{Q} = \{ (x_0, x_1; x_2) \in \mathbb{R}^+ \times \mathbb{R}^+ \times \mathbb{R} \, | \, 2x_0 x_1 \geq x_2^2\}.
\end{equation}
\end{defn}
This is a closed convex cone. The point $(x_0, x_1; x_2) \in \mathcal{Q}$ if and only if the matrix \begin{equation}\label{eqn:2x2psdQversion} \pmat{x_0 & \frac{1}{\sqrt{2}}x_2 \\  \frac{1}{\sqrt{2}}x_2 & x_1} \end{equation} is positive semidefinite. The second order cone is self-dual; i.e., $\mathcal{Q}$ is the set of points whose inner product with every point in $\mathcal{Q}$ is nonnegative. In particular, if $(x_0, x_1; x_2) \in \mathcal{Q}$ and $(y_0, y_1; y_2) \in \mathcal{Q}$, then $x_0 y_0 + x_1 y_1 + x_2y_2 \geq 0$. See \cite{alizadeh2003second} for more background on~$\mathcal{Q}$.

\begin{defn}
Let $0 \leq i \leq d$. The $i$th degree $d$ \emph{Bernstein polynomial} is \begin{equation} b_i(x) = \binom{d}{i}x^{i}(1-x)^{d-i}.\end{equation}  The degree $d$ of $b_i$ will be made clear from context.
\end{defn}
There are two well-known properties of Bernstein polynomials we use:
\begin{enumerate}
    \item They form a basis: $\{b_i\}_{i=0}^d$ is a basis of $\mathbb{R}_d[x]$.
    \item The basis is nonnegative: For every $d$, $0 \leq i \leq d$, and $0 \leq x \leq 1$, $b_i(x) \geq 0$.
\end{enumerate}
Even though the second property is stated for the unit interval, Bernstein polynomials are useful in a more general context. If the interval of interest is not $[0,1]$ but rather $[r,s]$, change $b_i(x)$ to $b_i(\frac{x-r}{s-r})$. This change of variables allows us to restrict our attention to the $[0,1]$ case without loss of generality.

The following lemma is an algebraic identity relating consecutive Bernstein polynomials. The identity provides a connection between Bernstein polynomials and the second order cone $\mathcal{Q}$.

\begin{restatable}{lem}{bernsteinsocp}\label{lemma:bernstein_socp}
    Let \begin{equation}\label{eqn:midef}m_i = \frac12 \cdot \frac{(i+1) \cdot (d-i+1)}{i \cdot (d-i)}.\end{equation} Then for $1 \leq i \leq d-1$,
        \[2m_i \, b_{i-1}(x) b_{i+1}(x) = b_{i}(x)^2.\] 
        In particular, for every $0 \leq x \leq 1$,
\begin{equation}\label{eqn:bernstein_socp} \Bigg(b_{i-1}(x), b_{i+1}(x); \frac{1}{\sqrt{m_i}}b_i(x) \Bigg) \in \mathcal{Q}.\end{equation}
\end{restatable}
\begin{proof}
        We calculate
        \begin{equation}
        \begin{aligned}
            2m_ib_{i-1}(x)b_{i+1}(x) &= \frac{(i+1) \cdot (d-i+1)}{i \cdot (d-i)}\left(\binom{d}{i-1}x^{i-1}(1-x)^{d-i+1}\right) \left( \binom{d}{i+1}x^{i+1}(1-x)^{d-i-1}\right)\\
            &= \frac{(i+1) \cdot (d-i+1)}{i \cdot (d-i)} \frac{d!}{(i-1)!(d-i+1)!}\frac{d!}{(i+1)!(d-i-1)!}x^{2i}(1-x)^{2(d-i)}\\
            &= \left(\frac{d!}{i!(d-i)!}\right)^2x^{2i}(1-x)^{2(d-i)}\\
            &= b_{i}^2(x).
        \end{aligned}
        \end{equation}
        Therefore, the quadratic inequality required by \eqref{eqn:bernstein_socp} holds with equality -- geometrically, these points all lie on the boundary of $\mathcal{Q}$.
    \end{proof}

\begin{defn}
Let $p \in \mathbb{R}_d[x]$. The \emph{Bernstein coefficients} of $p$ are the unique real numbers $\{p_i\}_{i=0}^d$ such that
\begin{equation}
    p(x) =\sum_{i=0}^d p_i b_i( x).
\end{equation}
\end{defn}
\noindent We write $p_i$ to refer to the Bernstein coefficients of a polynomial $p$.

\paragraph{Subdivision method} The Bernstein coefficients are a key ingredient of the \textit{subdivision method} to prove nonnegativity of a polynomial on the interval. The basic idea is to recursively bisect the domain, until a given termination criterion is satisfied. The subdivision method with nonnegativity criterion $S$ is described by the following algorithm.
\begin{quote}
\begin{itemize}
    \item[Step 1.] Check if $p \in S$. If yes, terminate.
    \end{itemize}\end{quote}
\begin{quote}\begin{itemize}
    \item[Step 2.] Otherwise, subdivide $p$ into $p^1$ and $p^2$ (explained below). Repeat algorithm with $p^1$ and $p^2$ and wait until both calls terminate. 
\end{itemize}
\end{quote}
If the algorithm terminates, then $p \in \mathcal{P}$. Termination of the subdivision algorithm is guaranteed for positive polynomials when $S = \mathcal{NB}$ \cite{leroy2012convergence}. 
As explained below, if the polynomial $p$ is written in the Bernstein basis, each of the steps can be efficiently computed. 

Step 1 of the subdivision method is to check an explicit nonnegativity condition (which we interchangeably call a ``termination'' or ``nonnegativity'' criterion). If $S = \mathcal{NB}$, then we can efficiently check whether $p \in \mathcal{NB}$ when the Bernstein coefficients are given.

Step 2 of the subdivision method is to subdivide a polynomial. The idea of subdivision is to split the unit interval into two subintervals and give two polynomials that agree with the original on each half. Those new polynomials are rescaled to be defined on $[0,1]$. \begin{figure}[htbp]%
\begin{subfigure}{.25\textwidth}
    \resizebox{\textwidth}{!}{\begin{tikzpicture}
        \begin{axis}[
                xmin=0, xmax=1,
                ymin=0, ymax=1,
                xlabel=$x$,
                ylabel=$(1-2x)^2$,
                axis lines=middle,
                xtick={0,0.2,0.4,0.6,0.8,1},
                ytick={0,0.2,0.4,0.6,0.8,1},
                clip=false,
            ]
            \addplot[
                domain=0:1,
                samples=100,
                color=red,
                thick,
            ]
            {(1-2*x)^2};
            \addplot [mark=none, dashed] coordinates {(.5,-.1) (.5,1.1)};
        \end{axis}
    \end{tikzpicture}}
    \caption{$p(x)=(1-2x)^2$}
    \label{fig:subdiv-explanation-0}
\end{subfigure}
\hfill
\begin{subfigure}{.25\textwidth}
\resizebox{\textwidth}{!}{\begin{tikzpicture}
        \begin{axis}[
                xmin=0, xmax=1,
                ymin=0, ymax=1,
                xlabel=$x$,
                ylabel=$(1-x)^2$,
                axis lines=middle,
                clip=false,
            ]
            \addplot[
                domain=0:1,
                samples=100,
                color=red,
                thick,
            ]
            {(1-x)^2};
            \addplot [mark=none, dashed] coordinates {(1,-.1) (1,1.1)};
        \end{axis}
    \end{tikzpicture}}
    \caption{$p^1(x)=(1-x)^2$}
    \label{fig:subdiv-explanation-1}\end{subfigure}
\hfill 
\begin{subfigure}{.25\textwidth}\resizebox{\textwidth}{!}{\begin{tikzpicture}
        \begin{axis}[
                xmin=0, xmax=1,
                ymin=0, ymax=1,
                xlabel=$x$,
                ylabel=$x^2$,
                axis lines=middle,
                clip=false,
            ]
            \addplot[
                domain=0:1,
                samples=100,
                color=red,
                thick,
            ]
            {(x)^2};
            \addplot [mark=none, dashed] coordinates {(0,-.1) (0,1.1)};
        \end{axis}
    \end{tikzpicture}}
    \caption{$p^2(x)=x^2$}
    \label{fig:subdiv-explanation-2}
    \end{subfigure}
    \caption{Subdivision of $(1-2x)^2$ into the polynomials shown in Figures~\ref{fig:subdiv-explanation-1} and \ref{fig:subdiv-explanation-2} as described in \eqref{eqn:subdivision-def}.}
    \label{fig:subdiv-explanation}
    \end{figure}
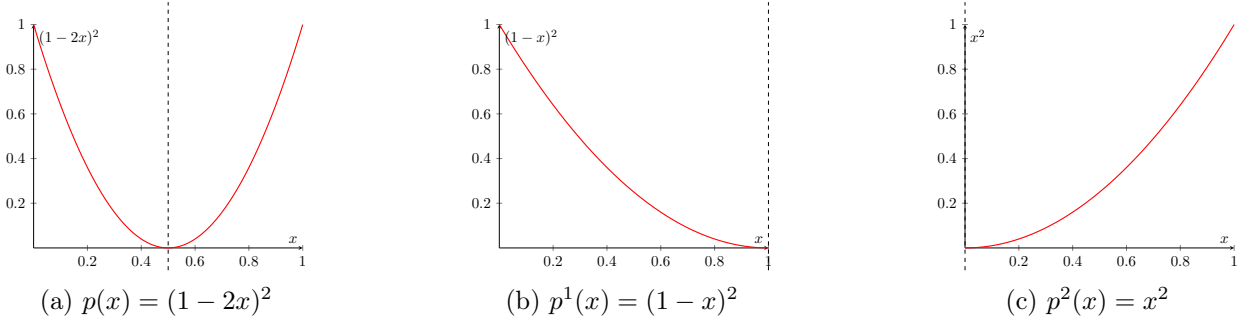 An illustration of this idea is given for the polynomial $(1-2x)^2$ in Figure~\ref{fig:subdiv-explanation}. Let $p(x) \in \mathbb{R}_d[x]$. Subdividing $p$ gives  polynomials $p^1(x)$ and $p^2(x)$ such that 
\begin{equation}\label{eqn:subdivision-def}
    p(x) = \begin{cases} p^1(2x) \qquad  &0 \leq x \leq \frac{1}{2}\\
     p^2(2x-1) \qquad &\frac{1}{2} \leq x \leq 1.\end{cases}
\end{equation}
When the Bernstein coefficients of a polynomial $p$ are known, De Casteljau's algorithm \cite{farouki2012bernstein} provides an efficient formula for computing the Bernstein coefficients of $p^1$ and $p^2$. For cubic polynomials, the Bernstein coefficients of $p^1$ and $p^2$ are \[\pmat{1 & 0 & 0 & 0\\ \frac{1}{2} & \frac{1}{2} & 0 & 0\\
\frac{1}{4} & \frac{2}{4} & \frac{1}{4} & 0 \\ \frac{1}{8} & \frac{3}{8} & \frac{3}{8} & \frac{1}{8} }\pmat{p_0\\p_1\\p_2\\p_3} \qquad \text{and} \qquad \pmat{\frac{1}{8} & \frac{3}{8} & \frac{3}{8} & \frac{1}{8}\\ 0 & \frac{1}{4} &\frac{2}{4} & \frac{1}{4}\\
0 & 0 & \frac{1}{2} & \frac{1}{2} \\ 0&0&0&1} \pmat{p_0\\p_1\\p_2\\p_3}.\]

 In Section~\ref{sec:numerical}, we explore how the number of subdivision steps required by this method is impacted by using our new nonnegativity criterion versus using $\mathcal{NB}$. %
 While we consider just one variant of univariate subdivision, the general idea of subdivision is useful for a variety of problems in robustness analysis and computer-aided design~\cite{lane1980theoretical}.

\section{The cubic case}
We begin with the cubic case because it is the simplest setting in which our results are nontrivial. First we present the exact condition for nonnegativity alluded to in Section~\ref{sec:intro} and describe a strategy to use it to get new conditions on polynomials that imply nonnegativity. Then we employ this method to provide one based on geometric means in Section~\ref{sec:P2_def} and justify why it is natural in Section~\ref{sec:why}. Since in the cubic case the set of nonnegative polynomials can be understood via the discriminant, Section~\ref{sec:cubic-exact} contains an exact discriminantal condition, unrelated to our geometric mean one, in order to compare how much simpler ours is. We finish this section by showing how the geometric mean condition can also be used to construct an exact nonnegativity characterization.

The following lemma, written in the language of Bernstein polynomials and second order cones, gives an exact characterization of cubic nonnegativity and is the basis for how we develop our new conditions. 

\begin{restatable}{lem}{nonnegativesocp}[Cubic nonnegativity and SOCP]\label{lem:nonnegative_socp} Let $p \in\mathbb{R}_3[x]$. The polynomial $p \in \mathcal{P}$ if and only if there exist $c_1, c_2 \in \mathbb{R}$ such that
\begin{equation}\label{eqn:nonnegative_socp}
    \Big(p_0, \; p_2-\frac{c_2}{3} ; \;  \frac{c_1}{\sqrt{6}} \Big) \in \mathcal{Q} \qquad \text{ and } \qquad 
    \Big(p_1-\frac{c_1}{3},  \; p_3;  \; \frac{c_2 }{\sqrt{6}}\Big)\in \mathcal{Q}.
\end{equation}
\end{restatable}
\begin{proof}First we prove sufficiency: If the second order cone conditions hold for $p$, then $p \in \mathcal{P}$. In this proof, we write $b_i$ to mean $b_i(x)$ for any fixed $0 \leq x \leq 1$. Plugging $n = 3$ into Lemma~\ref{lemma:bernstein_socp}, we get $(b_0, b_2;\sqrt{\frac{2}{3}} b_1) \in \mathcal{Q}$ and $(b_1, b_3; \sqrt{\frac{2}{3}}b_2) \in \mathcal{Q}$. 
Applying self duality of the second order cone, we get 
    \begin{equation}\begin{aligned}(p_0, p_2-\frac{c_2}{3}; \frac{c_1}{\sqrt{6}}) \cdot (b_0, b_2;\sqrt{\frac{2}{3}} b_1)& 
    = &-\frac{c_2}{3}b_2 + &\frac{c_1}{3}b_1 + p_0b_0 + p_2b_2 &\geq 0 \\ (p_1-\frac{c_1}{3}, p_3;  \frac{c_2}{\sqrt{6}}) \cdot (b_1, b_3; \sqrt{\frac{2}{3}}b_2)&
    = &\frac{c_2}{3}b_2 -&\frac{c_1}{3}b_1 + p_1b_1 + p_3b_3 &\geq 0.\end{aligned}\end{equation} Adding these, we get $p_0b_0 + p_1b_1 + p_2b_2 + p_3b_3 \geq 0$ as desired. That proves sufficiency.
    
    Next we prove necessity. If $p \geq 0$ on $[0,1]$, then by the Markov–Luk\'acs Theorem (\cite[p.\ 4]{szeg1939orthogonal}), $p(x) = xq_1^2(x) + (1-x) q_2^2(x)$ where $q_1(x)$ and $q_2(x)$ are linear functions. A basis of linear functions is given by $\{x, 1-x\}$, so there exist $M_1, M_2 \succeq 0$ such that 
    \begin{equation}
        p(x) = x \pmat{x \\ 1-x}^T M_1 \pmat{x \\ 1-x} + (1-x)\pmat{x \\ 1-x}^T M_2 \pmat{x \\ 1-x}.
    \end{equation}
    The affine constraints on the coefficients give $M_1 = \pmat{p_0 & \frac{c_1}{2} \\ \frac{c_1}{2} & 3p_2 - c_2}$ and $M_2 = \pmat{3p_1-c_1 & \frac{c_2}{2} \\ \frac{c_2}{2} &p_3}$ for some $c_1, c_2\in \mathbb{R}$. Multiplying $M_1$ by $\pmat{1 & 0 \\ 0 &\frac{1}{\sqrt{3}}}$ on both sides and $M_2$ by $\pmat{\frac{1}{\sqrt{3}} & 0 \\ 0 &1}$ on both sides gives two matrices that are positive semidefinite if and only if \eqref{eqn:nonnegative_socp} is satisfied.
    \end{proof}

\begin{remark}
Lemma~\ref{lem:nonnegative_socp} above can be interpreted as a specialization to the cubic case of the usual SDP characterization of nonnegative polynomials, adapted to the interval; see e.g., \cite[Lemma 3.33]{BPRbook}. This is because SDPs of size $2 \times 2$ can be equivalently written as second-order cone programs.
\end{remark}

\subsection{A new strategy} \label{sec:P2_def} 

 If we want to use Lemma~\ref{lem:nonnegative_socp} to prove that a given polynomial $p \in \mathcal{P}$ is nonnegative on the interval, we must find $c_1$ and $c_2$ that satisfy \eqref{eqn:nonnegative_socp}. Here are three possible strategies to find $c_1$ and $c_2$.

\paragraph{Convex optimization} Given $p$, one option is to solve the convex feasibility problem~\eqref{eqn:nonnegative_socp} for some feasible $c_1$ and $c_2$. This is a foolproof way, in the sense that for every $p \in \mathcal{P}$, there always exist (generally nonunique) solutions $c_i$. The drawback of this strategy is the computational cost.

\paragraph{Constant guess, independent of $p$} On the other extreme, one could ignore $p$ and always try the same constants $c_1$ and $c_2$. 
Given any constants $c_1$ and $c_2$ there is a set of feasible $p$ to \eqref{eqn:nonnegative_socp} induced by any such choice; this set of feasible $p$ is a (possibly empty) convex subset of $\mathcal{P}$. Given $p$ and the fixed constant choices $c_1$ and $c_2$, checking feasibility of~\eqref{eqn:nonnegative_socp} is as simple as verifying a few inequalities. 

In fact, this is the strategy used to construct $\mathcal{NB}$. The set of polynomials that are feasible in \eqref{eqn:nonnegative_socp} when $c_1 = c_2 = 0$ is exactly the set $\mathcal{NB}$. This also points out the main weakness of this strategy: there are plenty of polynomials in $\mathcal{P}$ that are not feasible in \eqref{eqn:nonnegative_socp} for $c_1 = c_2 = 0$. For example, consider $p = (1-x)(1-4x)^2 + x^3$, whose Bernstein coefficients are $(1,-2,3,1)$, not all of which are nonnegative. On the other hand, the conditions in Lemma~\ref{lem:nonnegative_socp} hold for $c_1 = -6$ and $c_2 = 0$.

\paragraph{Explicit function of $p$} There is a clear compromise between the two strategies above. We propose instead a middle ground, where $c_1$ and $c_2$ are ``simple,'' explicit functions of $p$. The decision variables depend on the data, but they are explicit, low-complexity computable functions.

What would be a good definition for $c_1$ and $c_2$ of this form? A ``good choice'' for $c_1$ and $c_2$ is one with the following informally stated properties:\begin{itemize} \item Non-inferiority: it should be no worse than fixing $c_1 = c_2 = 0$ and \item Maximality: it should be maximal in some sense.\end{itemize} Formal statements of these properties and additional intuition for our choices are presented in Section~\ref{sec:why}.

For the cubic case, we propose the choice
\begin{equation}
c_1 = -2\sqrt{3p_0(p_2)_+} 
\quad \text{   and   } \quad 
c_2 = -2\sqrt{3p_3(p_1)_+}.
\label{eqn:cubic_c1c2}
\end{equation} 
While this is more complicated than $c_1 = c_2 = 0$, Section~\ref{sec:why} provides additional motivation and an argument of why our proposal satisfies the desirata mentioned above. Substituting the values in \eqref{eqn:cubic_c1c2} into \eqref{eqn:nonnegative_socp} gives the conditions that define our new set.
\begin{defn}
    The set $\mathcal{GB}$ (``Geometric Bernstein'') is
    \begin{equation}\label{eqn:definingGB}
        \mathcal{GB} = \Bigg\{ \sum_{i=0}^3p_ib_i^3(x) \quad \Bigg| \quad p_0, p_3 \geq 0, 
        \qquad p_2 \geq -2\sqrt{\frac{p_3(p_1)_+}{3}},
        \qquad p_1 \geq -2\sqrt{\frac{p_0(p_2)_+}{3}}  \Bigg\}.
    \end{equation}
\end{defn}%
\noindent Each Bernstein coefficient is bounded below by a multiple of the geometric mean of its neighboring Bernstein coefficients, which motivates the name of the set. A visual comparison of~$\mathcal{GB}$ to~$\mathcal{NB}$ and~$\mathcal{P}$ is given in Figure~\ref{fig:inclusion-cross-sections11}.
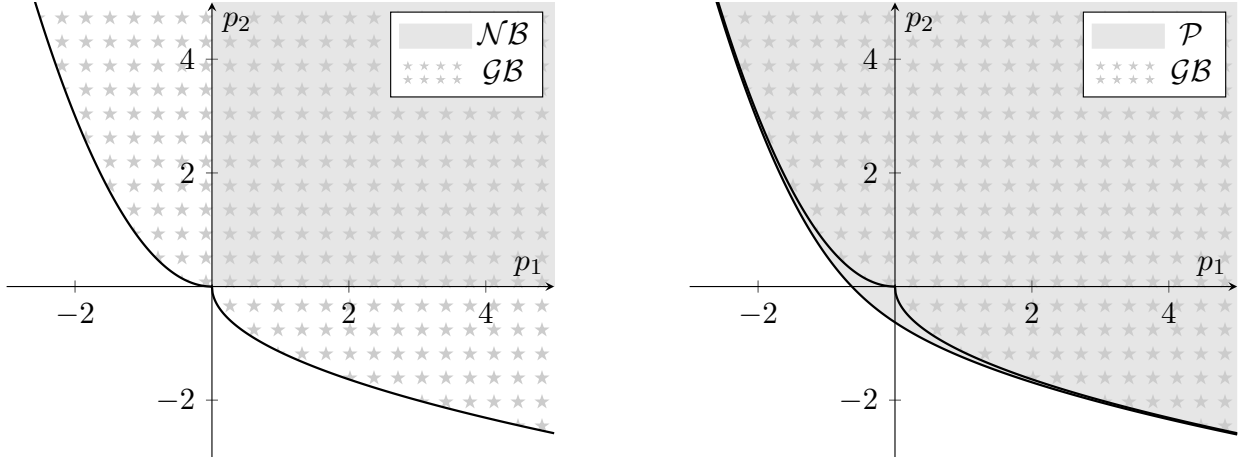
\begin{figure}
    \centering

    \begin{subfigure}{.45\textwidth} 
    \resizebox{\textwidth}{!}{\begin{tikzpicture}[
    declare function={
    func(\x)= (\x < 0) *(3/4*\x^2)   + (\x >= 0)*(-2*sqrt(\x/3))
   ;}]
    \begin{axis}[axis on top,
            xmin=-3, xmax=5,
            ymin=-3, ymax=5,
            xlabel=$p_1$,
            ylabel=$p_2$,
            axis lines=middle,
            clip = true, 
            xticklabels={},
            yticklabels={},
            extra x ticks={-2,2,4},
            extra y ticks={-2,2,4},
            legend image post style={scale=1.5},
        star legend/.style={
          legend image code/.code={
          \foreach \x in {0.01,0.03,.05,.07}
            {\node[star, star points=5, star point ratio=3, draw=gray!40, fill=gray!40, minimum size = 1mm, anchor=center, scale = 0.1] at (\x,-.01) {};}
            \foreach \x in {0.01,0.03,.05,.07}
            {\node[star, star points=5, star point ratio=3, draw=gray!40, fill=gray!40, minimum size = 1mm, anchor=center, scale = 0.1] at (\x,.01) {};}
          }
        }        ]
        \addplot [thick, samples = 1001, smooth, domain = -3:5, name path global = GB, forget plot] {func(x)};
  
        \path[name path global=tophalf] (axis cs:-3,5.1) -- (axis cs:5,5.1);

        \path[name path global=xaxis]
        (axis cs:-3,0) -- (axis cs:5, 0);
        \addplot[gray!20] fill between[of=tophalf and xaxis,
        soft clip = {domain=0:5},
        ];
            
        \addplot[forget plot] fill between[of=tophalf and GB,
        split,
        every segment no 1/.style={pattern=fivepointed stars,pattern color=gray!40},
        ];  
        \addlegendimage{star legend}
        \legend{
            $\mathcal{NB}$,
            $\mathcal{GB}$
        };
    \end{axis}
\end{tikzpicture}
}

    \end{subfigure}
\hfill
    \begin{subfigure}{.45\textwidth}\resizebox{\textwidth}{!}{\begin{tikzpicture}[
    declare function={
    func(\x)= (\x < 0) *(3/4*\x^2)   + (\x >= 0)*(-2*sqrt(\x/3));
   }
   ]
    \begin{axis}[axis on top,
            xmin=-3, xmax=5,
            ymin=-3, ymax=5,
            xlabel=$p_1$,
            ylabel=$p_2$,
            axis lines=middle,
            clip = true, 
            xticklabels={},
            yticklabels={},
            extra x ticks={-2,2,4},
            extra y ticks={-2,2,4},
            legend image post style={scale=1.5},
        star legend/.style={
          legend image code/.code={
          \foreach \x in {0.01,0.03,.05,.07}
            {\node[star, star points=5, star point ratio=3, draw=gray!40, fill=gray!40, minimum size = 1mm, anchor=center, scale = 0.1] at (\x,-.01) {};}
            \foreach \x in {0.01,0.03,.05,.07}
            {\node[star, star points=5, star point ratio=3, draw=gray!40, fill=gray!40, minimum size = 1mm, anchor=center, scale = 0.1] at (\x,.01) {};}
          }
        }        ]

    \addplot[name path global = POS, black, thick, smooth, forget plot] table[x=x,y=y, col sep=comma] {tikz-plots/disc-values.csv};
    
    \addplot [thick, samples = 1001, smooth, domain = -3:5, name path global = GB, forget plot] {func(x)};
  
        \path[name path global=tophalf] (axis cs:-3,5.1) -- (axis cs:5,5.1);

        \path[name path global=xaxis]
        (axis cs:-3,0) -- (axis cs:5, 0);
        \addplot[gray!20] fill between[of=POS and tophalf
        ];
        \addplot[forget plot] fill between[of=tophalf and GB,
        split,
        every segment no 1/.style={pattern=fivepointed stars,pattern color=gray!40},
        ];  
        \addlegendimage{star legend}
        \legend{
            $\mathcal{P}$,
            $\mathcal{GB}$
        };
    \end{axis}
\end{tikzpicture}
}
    \end{subfigure}
    \caption{Membership in $\mathcal{NB}$, $\mathcal{GB}$, and $\mathcal{P}$ for a two dimensional slice ($p_0 = p_3 = 1$) of all cubic polynomials. The axes of the graphs are $p_1$ and $p_2$. A point is shaded depending on whether $b_0(x) + p_1b_1(x) + p_2b_2(x) + b_3(x)$ is in $\mathcal{NB}$, $\mathcal{GB}$, or $\mathcal{P}$. The inclusions $\mathcal{NB} \subset \mathcal{GB} \subset\mathcal{P}$ are evident, and for this slice each inclusion is strict. We see that the set $\mathcal{GB}$ is not convex.
    }\label{fig:inclusion-cross-sections11}
\end{figure}

\subsection{Why this definition?} \label{sec:why}

\paragraph{Non-inferiority} The following theorem formalizes the claim that our choices of $c_1$ and $c_2$ are at least as good as $c_1 = c_2 = 0$.

\begin{restatable}{thm}{thmoneintwointhree}\label{thm:1in2in3}
    $\mathcal{NB} \subset \mathcal{GB}\subset \mathcal{P}$
\end{restatable}
\begin{proof}[Proof of Theorem~\ref{thm:1in2in3}]
    The containment $\mathcal{NB} \subset \mathcal{GB}$ is simple: If all $p_i \geq 0$, the left hand side of all inequalities in \eqref{eqn:definingGB} are nonnegative and the right hand sides are nonpositive. Therefore $p \in \mathcal{GB}$.
    
    Next, we show that $\mathcal{GB} \subset \mathcal{P}$.
    Let $p \in \mathcal{GB}$. We claim setting $c_1$ and $c_2$ as in \eqref{eqn:cubic_c1c2} satisfies \eqref{eqn:nonnegative_socp}. The constraint $(p_0, p_2 - \frac{c_2}{3}; \frac{c_1}{\sqrt{6}}) \in \mathcal{Q}$ is a combination of three inequalities:
    \begin{itemize}
        \item $p_0 \geq 0$: This is required by membership in $\mathcal{GB}$.
        \item $p_2 - \frac{c_2}{3} \geq 0$: Since $p \in \mathcal{GB}$, $0 \leq p_2 + 2\sqrt{\frac{(p_1)_+p_3}{3}} = p_2 - \frac{c_2}{3}$.
        \item $2p_0(p_2 - \frac{c_2}{3}) \geq \frac{c_1^2}{6}$: First consider the case $p_2 \geq 0$. We always have  $c_2 \leq 0$, so $2p_0(p_2 - \frac{c_2}{3}) \geq 2p_0p_2 = \frac{c_1^2}{6}$. In the other case $p_2 < 0$ and so $c_1 = 0$. In that case the first two inequalities prove the product is nonnegative. 
    \end{itemize}
    Therefore this second order cone condition is satisfied. The argument that the other second order cone condition holds is analogous.\end{proof}

A less precise, but more insightful, ``picture proof'' that $\mathcal{NB} \subset \mathcal{GB}$ is given in Figure~\ref{fig:abchoice}, which also motivates the choice of $c_1$ and $c_2$. The feasible region of each second order cone condition is the shaded region inside of each parabola. We want a point $(c_1,c_2)$ inside the intersection of these two regions. The set $\mathcal{NB}$ results from picking the origin. The $\star$ will always be in the intersection whenever the origin is.

The intuition for why $\mathcal{GB}$ strictly contains $\mathcal{NB}$ is the following. If $p_1 < 0$, the horizontal parabola moves left. Then we would choose $c_2 = 0$, but picking a negative $c_1$ may still be in the intersection even if the origin is not.

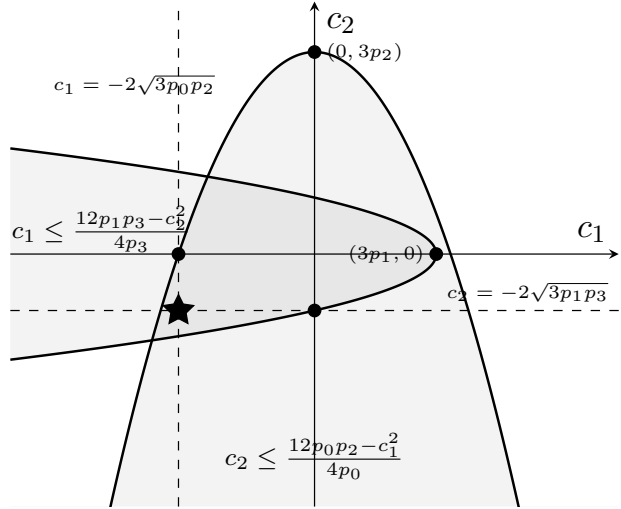
\begin{figure}
    \centering
    \resizebox{.5\textwidth}{!}{\begin{tikzpicture}
    \begin{axis}[
            xmin=-1, xmax=1,
            ymin=-1, ymax=1,
            xlabel=$c_1$,
            ylabel=$c_2$,
            axis lines=middle,
            clip=true,
            xticklabels={},
            yticklabels={},
            xtick = \empty,
            ytick = \empty
        ]
        \addplot[
            name path global=vert,
            domain=-1:1,
            samples=100,
            color=black,
            thick,
            smooth
        ]
        {.8-4*x^2};
        \path[name path global=bottom] (axis cs:-1,-1) -- (axis cs:1,-1);
        \addplot [
            thick,
            color=black,
            fill=black, 
            fill opacity=0.05
        ]
        fill between[
            of=vert and bottom,
        ];
        \addplot[
            name path global=tophalf,
            domain=-1:.4,
            samples=1000,
            color=black,
            thick,
            smooth
        ]
        {sqrt(-(x-.4)/8};
        \addplot[
            name path global=bottomhalf,
            domain=-1:.4,
            samples=1000,
            color=black,
            thick,
            smooth
        ]
        {-sqrt(-(x-.4)/8};
        \addplot [
            thick,
            color=black,
            fill=black, 
            fill opacity=0.05
        ]
        fill between[
            of=tophalf and bottomhalf,
        ];
        \node at (0, -.8)    {\scriptsize{$c_2 \leq \frac{12p_0p_2 - c_1^2}{4p_0}$}};
        \node at (-.7, .1)    {\scriptsize{$c_1 \leq \frac{12p_1p_3 - c_2^2}{4p_3}$}};
        \filldraw[black] (0,.8) circle (2pt) node[anchor=west]{\tiny{$(0,3p_2)$}};
        \filldraw[black] (.4,0) circle (2pt) node[anchor=east]{\tiny{$(3p_1,0)$}};
        \draw[dashed] (-0.4472135955,-1)--(-0.4472135955,1);
        \filldraw[black] (-0.4472135955,0) circle (2pt);
        \node[anchor=20, circle] at (-.3,.8) {\tiny{$c_1 = -2\sqrt{3p_0p_2}$}};
        \draw[dashed] (-1,-0.22360679775)--(1,-0.22360679775);
        \filldraw[black] (0, -0.22360679775) circle (2pt);
        \node at (.7,-.15) {\tiny{$c_2 = -2\sqrt{3p_1p_3}$}};
        \node[draw, star, solid, black, scale = .5, star point ratio = 2, star points = 5, fill = black] at (-0.4472135955,-0.22360679775) {};
    \end{axis}
\end{tikzpicture}
}
    \caption{The feasibility region of $(c_1,c_2)$ for Lemma~\ref{lem:nonnegative_socp} when $p_i > 0$. Re-arrange the conditions given by the second order cone problem to get $c_2 \leq \frac{12p_0p_2 - c_1^2}{4p_0}$ and $c_1 \leq \frac{12p_1p_3 - c_2^2}{4p_3}$. The region of feasibility for $c_1$ and $c_2$ is the intersection of the shaded regions bounded by the parabolas. When $c_1= -2\sqrt{3p_0p_2}$, then as long as $c_2 \leq 0$, the point will be in the feasible region of the vertical parabola. Similar reasoning follows for the horizontal parabola. The choice made for $\mathcal{GB}$ is marked with a star, as opposed to the choice $(0,0)$ used for $\mathcal{NB}$.}
    \label{fig:abchoice}
\end{figure}

\paragraph{Maximality} Now that we have established our choice of $c_1$ and $c_2$ is ``better'' than $c_1 =c_2 = 0$, we formalize the sense in which our choice is maximal. In general, $c_1$ and $c_2$ could be functions of all of the Bernstein coefficients; i.e., $c_1 = g(p_0,p_1,p_2,p_3)$ and $c_2 = h(p_0,p_1,p_2,p_3)$.\footnote{For instance, if $g$ and $h$ were defined as giving the analytic center  \cite[\S8.5]{BoydBook} of the convex set \eqref{eqn:nonnegative_socp}, then that $g$ and $h$ would always give $c_1$ and $c_2$ that prove nonnegativity whenever $p$ is nonnegative on the interval. This yields $g$ and $h$ that are semialgebraic functions, but they would be ``too complicated'' for a simple test.} For the sake of simplicity our choice of $g$ and $h$ will be such that they are only functions of \emph{two} of the Bernstein coefficients each. Furthermore, by symmetry, we restrict our attention to those where $c_1 = g(p_0,p_1,p_2,p_3) = f(p_0,p_2)$ and $c_2 = h(p_0,p_1,p_2,p_3) = f(p_3,p_1)$ for some $f :\mathbb{R}^2 \to \mathbb{R}$. Let $S(f)$ be the set
\begin{equation}
    S(f) = \Big\{ p \in \mathbb{R}_3[x] \, \Big| \, c_1 = f(p_0,p_2) \text{ and } c_2 = f(p_1, p_3) \text{ are feasible for \eqref{eqn:nonnegative_socp}} \Big\}.
\end{equation}

Using a single $f$ for both $c_1$ and $c_2$ ensures that membership in $S(f)$ does not change when transforming $p$ by $x \to 1-x$. For our choice in \eqref{eqn:cubic_c1c2}, we take $f(z_1,z_2) = -2\sqrt{3(z_1)_+(z_2)_+}$, which we denote as $f^*$. For reference, $S(f^*) = \mathcal{GB}$ and $S(0) = \mathcal{NB}$. Within this family, $S(f^*)$ is maximal in the sense of the following proposition, which is proven in Appendix~\ref{sec:p2proofs}.

\begin{restatable}{prop}{tight}\label{prop:tight} 
If there exists a $p \in \mathcal{GB}\setminus \mathcal{NB}$ such that both $f(p_0, p_2) \neq f^*(p_0, p_2)$ and $f(p_3, p_1) \neq f^*(p_3, p_1)$, then
$S(f^*) \not\subset S(f)$. Equivalently, if $S(f^*) \subset S(f)$ then for every $p \in \mathcal{GB} \setminus \mathcal{NB}$, either $f(p_0,p_2) = f^*(p_0,p_2)$ or $f(p_3, p_1) = f^*(p_3,p_1)$.
\end{restatable}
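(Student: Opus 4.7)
The plan is to argue directly: given $p \in \mathcal{GB}\setminus \mathcal{NB}$ satisfying the hypothesis, I will exhibit a polynomial $q \in \mathcal{GB}$ for which the proposed pair $(f(q_0, q_2), f(q_3, q_1))$ fails the second-order cone conditions of Lemma~\ref{lem:nonnegative_socp}, yielding $q \in S(f^*) \setminus S(f)$. By the symmetry $x \mapsto 1 - x$, which swaps $(p_0, p_1) \leftrightarrow (p_3, p_2)$, I may assume without loss of generality that $p_1 < 0 < p_2$; then $f^*(p_0, p_2) = -2\sqrt{3 p_0 p_2}$ and $f^*(p_3, p_1) = 0$, and the $\mathcal{GB}$ inequality $p_1 \geq -2\sqrt{p_0 (p_2)_+ / 3}$ forces $p_0 > 0$.

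I handle the degenerate case $p_3 = 0$ first via the hypothesis $f(p_3, p_1) \neq 0$: when $p_3 = 0$, the second condition of \eqref{eqn:nonnegative_socp} collapses to $c_2 = 0$ together with $c_1 \leq 3 p_1$, so $c_2 = f(0, p_1) \neq 0$ is infeasible and $p \notin S(f)$; take $q = p$.

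For $p_3 > 0$, I invoke the other hypothesis $f(p_0, p_2) \neq -2\sqrt{3 p_0 p_2}$ and split on its sign. If $f(p_0, p_2) > -2\sqrt{3 p_0 p_2}$, set $q = (p_0, -2\sqrt{p_0 p_2 / 3}, p_2, p_3) \in \mathcal{GB}$; since $f(q_0, q_2) = f(p_0, p_2)$ and $3 q_1 = -2\sqrt{3 p_0 p_2}$, the second SOC inequality for $q$ would force $f(p_0, p_2) \leq -2\sqrt{3 p_0 p_2} - f(q_3, q_1)^2/(4 p_3) \leq -2\sqrt{3 p_0 p_2}$, contradicting the subcase assumption, so $q \notin S(f)$. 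If instead $f(p_0, p_2) < -2\sqrt{3 p_0 p_2}$, let $q_\lambda = (p_0, p_1, p_2, \lambda p_3)$ for $\lambda > 0$, which lies in $\mathcal{GB}$ for every $\lambda > 0$. Assuming $q_\lambda \in S(f)$ for contradiction, the first SOC inequality forces $f(\lambda p_3, p_1) \leq A := 3 p_2 - f(p_0, p_2)^2 / (4 p_0) < 0$ (negativity follows from $f(p_0, p_2)^2 > 12 p_0 p_2$), giving $f(\lambda p_3, p_1)^2 \geq A^2 > 0$, while the second SOC inequality gives $f(\lambda p_3, p_1)^2 \leq 4 \lambda p_3 (3 p_1 - f(p_0, p_2))$ with positive right-hand side (using $3 p_1 \geq -2\sqrt{3 p_0 p_2} > f(p_0, p_2)$). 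These bounds are inconsistent for $\lambda < A^2 / [4 p_3 (3 p_1 - f(p_0, p_2))]$, yielding $q_\lambda \notin S(f)$ for any such small $\lambda > 0$.

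The main technical delicacy is the asymptotic argument when $f(p_0, p_2) < -2\sqrt{3 p_0 p_2}$: shrinking $q_3$ narrows the ``horizontal'' parabolic region in Figure~\ref{fig:abchoice} while leaving the ``vertical'' one fixed, which eventually eliminates the required feasibility overlap at the forced value $c_1 = f(p_0, p_2)$. The remaining subcases reduce to carefully unpacking the inequalities of Lemma~\ref{lem:nonnegative_socp}, and the degenerate $p_3 = 0$ case is the one place where the second hypothesis $f(p_3, p_1) \neq f^*(p_3, p_1)$ is genuinely needed.
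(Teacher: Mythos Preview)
Your proof is correct and follows the same overall strategy as the paper's: exhibit an explicit $q\in\mathcal{GB}=S(f^*)$ for which the pair $(f(q_0,q_2),f(q_3,q_1))$ violates one of the SOC constraints of Lemma~\ref{lem:nonnegative_socp}, with one branch requiring a limiting construction. The organization, however, differs in useful ways. You apply the $x\mapsto 1-x$ symmetry upfront to fix $p_1<0<p_2$ and then split solely on the sign of $f(p_0,p_2)-f^*(p_0,p_2)$ (plus the degenerate endpoint $p_3=0$), whereas the paper first splits on $f(p_3,p_1)-f^*(p_3,p_1)$ and then on the sign of~$p_2$. Your witnesses modify $q_1$ (pushing it to the extremal $\mathcal{GB}$ value $-2\sqrt{p_0p_2/3}$) or shrink $q_3=\lambda p_3\to 0$; the paper instead perturbs $q_0$ and/or $q_2$ and sends $q_0=1/k\to 0$ in its asymptotic branch. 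Both choices force the same pinching of the SOC inequalities. Your explicit treatment of $p_3=0$ --- the only place you invoke the second hypothesis $f(p_3,p_1)\neq f^*(p_3,p_1)$ --- corresponds in the paper to the opening ``if $p\notin S(f)$, done'' reduction. The net effect is a slightly tidier case decomposition that makes the role of each hypothesis more transparent.
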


\subsection{Aside: an exact condition with the discriminant}\label{sec:cubic-exact}

One can also characterize strict positivity through the \emph{discriminant} of a polynomial. As opposed to the second order cone constraints that require solving a convex optimization problem, this condition can be checked explicitly. The discriminant is defined as follows.
\begin{defn}
    The \emph{discriminant} of a polynomial $p$ with leading coefficient $a_n$ in the monomial basis and roots $\alpha_i$ for $1\leq i \leq n$ is
    \begin{equation}\label{eqn:discdef}
        D(p) = a_n^{2n-2}\prod_{1 \leq i < j \leq n}(\alpha_i - \alpha_j)^2.
    \end{equation}
\end{defn}
\noindent Although not obvious from this definition, $D(p)$ is a polynomial in the (monomial or Bernstein) coefficients of $p$.

\noindent For now we are only concerned with cubic polynomials, so we explicitly compute the discriminant for this case in terms of Bernstein coefficients.
\begin{example}
The discriminant of the polynomial $\sum_{i=0}^3 p_i b_i(x)$ is
\begin{equation}
    -27 (-3 p_1^2 p_2^2 + 4 p_0p_2^3 + 4 p_1^3 p_3 - 6 p_0p_1p_2p_3 + p_0^2 p_3^2).
\end{equation}
\end{example}

\begin{figure}[tbp]
    \centering
    \begin{subfigure}[t]{.45\textwidth}
    \begin{tikzpicture}[
    declare function={
    func(\x)= (\x < 0) *(3/4*\x^2)   + (\x >= 0)*(-2*sqrt(\x/3));
   }
   ]
    \begin{axis}[axis on top,
    legend pos = south west,
            xmin=-3, xmax=5,
            ymin=-3, ymax=5,
            xlabel=$p_1$,
            ylabel=$p_2$,
            axis lines=middle,
            clip = true, 
            xticklabels={},
            yticklabels={},
            extra x ticks={-2,2,4},
            extra y ticks={-2,2,4},
            legend image post style={scale=1.5},
        star legend/.style={
          legend image code/.code={
          \foreach \x in {0.01,0.03,.05,.07}
            {\node[star, star points=5, star point ratio=3, draw=gray!40, fill=gray!40, minimum size = 1mm, anchor=center, scale = 0.1] at (\x,-.01) {};}
            \foreach \x in {0.01,0.03,.05,.07}
            {\node[star, star points=5, star point ratio=3, draw=gray!40, fill=gray!40, minimum size = 1mm, anchor=center, scale = 0.1] at (\x,.01) {};}
          }
        }        ]

    \addplot[name path global = BOTTOM, black, thick, smooth, forget plot] table[x=x,y=y, col sep=comma] {tikz-plots/disc-values.csv};
    \addplot[name path global = BOTTOMRIGHT, black, thick, smooth, forget plot] table[x=x,y=y, col sep=comma] {tikz-plots/disc-values2.csv};
    \addplot[name path global = TOPRIGHT, black, thick, smooth, forget plot] table[x=x,y=y, col sep=comma] {tikz-plots/disc-values3.csv};
    
        \path[name path global=tophalf] (axis cs:-2.60384,5.1) -- (axis cs:5,5.1);

        \path[name path global=xaxis]
        (axis cs:-3,0) -- (axis cs:5, 0);
        \addplot[gray!20] fill between[of=tophalf and BOTTOM
        ]; \addlegendentry{$\mathcal{P}$}
        \addplot[pattern={Lines[angle=45,distance=8pt]}] fill between[of=tophalf and TOPRIGHT
        ]; \addlegendentry{$\mathcal{D}$}

        \addplot[pattern={Lines[angle=45,distance=8pt]}] fill between[of=BOTTOM and BOTTOMRIGHT
        ];
    \end{axis}
\end{tikzpicture}

    \caption{The two-dimensional slice $p_0 = p_3 = 1$. A point is shaded if $b_0(x) + p_1b_1(x) + p_2b_2(x)+b_3(x)$ is a polynomial in $\mathcal{P}$ or $\mathcal{D}^\geq$.} \label{fig:NB_D}
    \end{subfigure}
    \hfill
    \begin{subfigure}[t]{.45\textwidth}
        \begin{tikzpicture}[
declare function={
func(\x)= (\x < 0) *(3/4*\x^2)   + (\x >= 0)*(-2*sqrt(\x/3));
}
]
\begin{axis}[axis on top,
legend pos = south west,
        xmin=-3, xmax=5,
        ymin=-3, ymax=5,
        xlabel=$p_1$,
        ylabel=$p_2$,
        axis lines=middle,
        clip = true, 
        xticklabels={},
        yticklabels={},
        extra x ticks={-2,2,4},
        extra y ticks={-2,2,4},
        legend image post style={scale=1.5},
    star legend/.style={
      legend image code/.code={
      \foreach \x in {0.01,0.03,.05,.07}
        {\node[star, star points=5, star point ratio=3, draw=gray!40, fill=gray!40, minimum size = 1mm, anchor=center, scale = 0.1] at (\x,-.01) {};}
        \foreach \x in {0.01,0.03,.05,.07}
        {\node[star, star points=5, star point ratio=3, draw=gray!40, fill=gray!40, minimum size = 1mm, anchor=center, scale = 0.1] at (\x,.01) {};}
      }
    }        ]

    \addplot[name path global = parabola, domain = -3:3, black, thick, forget plot] {3*x^2/4};
    \addplot[domain = -3:5, black, thick, forget plot] {0};

    \path[name path global=tophalf, black, thick] (axis cs:0,5.1) -- (axis cs:5,5.1);

    \path[name path global=xaxis]
    (axis cs:0,0) -- (axis cs:5, 0);
    \addplot[gray!20] fill between[of=tophalf and parabola
    ]; \addlegendentry{$\mathcal{P}$}
    \addplot[gray!20, forget plot] fill between[of=tophalf and xaxis
    ];
    \addplot[pattern={Lines[angle=45,distance=8pt]}] fill between[of=tophalf and parabola
    ]; \addlegendentry{$\mathcal{D}$}

    \node at (-2,0)[label=$r$, circle,fill,inner sep=1.5pt]{};
    \node at (-2,3)[label=left:$s$, circle,fill,inner sep=1.5pt]{};


\end{axis}
\end{tikzpicture}

        \caption{The two-dimensional slice $p_0 = 1,p_3 = 0$. A point is shaded if $b_0(x) + p_1b_1(x) + p_2b_2(x)$ is a polynomial in $\mathcal{P}$ or $\mathcal{D}^\geq$.} 
        \label{fig:NB_D2}
        \end{subfigure}
    \caption{The relationship between $\mathcal{P}$ and the discriminant for two slices of the set of cubic polynomials. The axes of the graphs are $p_1$ and $p_2$. The bold black curves mark where the discriminant vanishes. Figure~\ref{fig:NB_D} shows that $\mathcal{D}^\geq$ does not contain all of $\mathcal{P}$. We remark that although the discriminant is irreducible, it may factor when restricted to a two-dimensional slice, as in Figure~\ref{fig:NB_D2}.  This also shows that the discriminant can vanish even for polynomials that are not on the boundary of $\mathcal{P}$. The dots mark the polynomials $r$ and $s$ used in Examples~\ref{example:strictinterior1} and \ref{example:strictinterior2}.
    }
    \label{fig:NB&Dboth}
\end{figure}
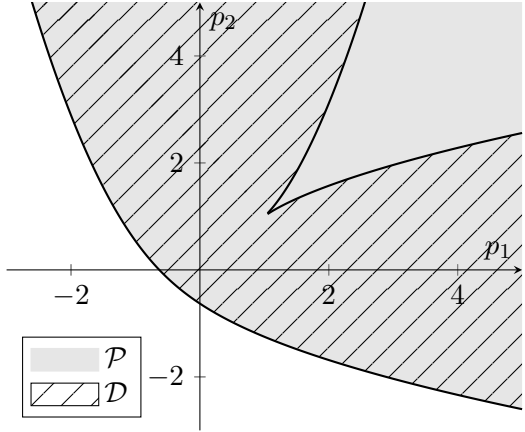
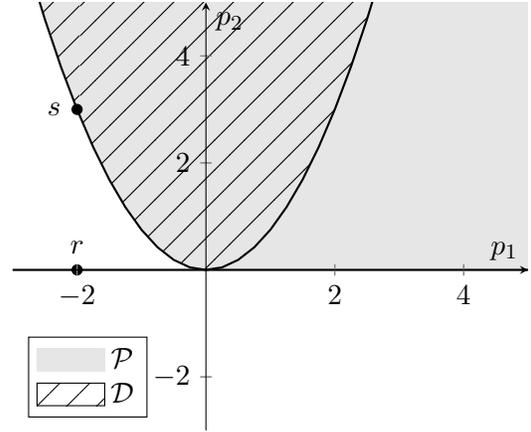

Polynomials with repeated roots provide an intuitive connection between the discriminant and the set of nonnegative polynomials. On the one hand, the discriminant of $p$ vanishes precisely when~$p$ has repeated roots, as can be seen directly from the definition in \eqref{eqn:discdef}. On the other hand, the boundary of the set of polynomials nonnegative on $\mathbb{R}$ consists of polynomials with repeated roots. This connection via polynomials with repeated roots suggests that changes in nonnegativity can be characterized by changes in sign of the discriminant.

There is a caveat for using this intuition for nonnegativity on the interval: it matters whether the repeated root is actually \emph{on} the interval -- it could be outside, or it could even be complex-valued. In those cases the discriminant would vanish, but $p$ may not on the boundary of $\mathcal{P}$. This concern is addressed by ensuring additional conditions, not just sign conditions on $D(p)$.

Theorem~\ref{thm:exactcubic} carefully formalizes the connection but requires the definition of a few sets. The interior of the set $\mathcal{P}$, which we denote $\mathcal{P}^\circ$, is the set of strictly positive cubic polynomials on the closed interval $[0,1]$. Similarly, $\mathcal{NB}^\circ$ is the set of cubic polynomials with strictly positive Bernstein coefficients. Let $\mathcal{D}^{>}$ be the set of cubic polynomials $p$ such that $p_0, p_3 > 0$ and $-D(p) > 0$. Let $\mathcal{D}^{\geq}$ be the set of cubic $p$ polynomials such that $p_0,p_3 \geq0$ and $-D(p) \geq 0$.\footnote{In fact, $\mathcal{D}^\geq$ is not the closure of $\mathcal{D}^>$. To see this, note the polynomial $r$ in Example~\ref{example:strictinterior1} is in $\mathcal{D}^\geq$ but is not the limit point of a sequence in $\mathcal{D}^>$.} It is evident from Figure~\ref{fig:NB_D} that $\mathcal{D}^>$ is not all of $\mathcal{P}^\circ$ (because $\mathcal{P}^\circ$ includes $\mathcal{NB}^\circ$). All that is needed is to include $\mathcal{NB}^\circ$: 
 
\begin{restatable}{thm}{exactcubic}\label{thm:exactcubic}
    $\mathcal{P}^\circ = \mathcal{D}^> \cup \mathcal{NB}^\circ$.
\end{restatable}
On the other hand, $\mathcal{P} \neq \mathcal{D}^\geq \cup \mathcal{NB}$.\footnote{It is easy to overlook these subtle complications on the boundary, but doing so could lead to imprecise results such as Proposition 2 of \cite{schmidt1988positivity}.} 
If the discriminant of $p$ vanishes when $p \not \in \mathcal{NB}$, it is not possible to characterize the sign of $p$ solely based on this information. We can see this in Figure~\ref{fig:NB_D2}. The negative horizontal axis has vanishing discriminant but does not contain nonnegative polynomials. The left hand side of the parabola consists of nonnegative polynomials. To be completely explicit, we provide two polynomials, one from each of these sets, marked with a dot. These examples show that both $\mathcal{P}$ and its complement have a nonempty intersection with $\mathcal{D}^\geq$.  
\begin{example}\label{example:strictinterior1}
    Let $r = (1-x)^3 - 6x(1-x)^2$. Not all the Bernstein coefficients are positive and the discriminant of $r$ is zero, but $r(\frac{1}{2}) = -\frac{5}{8}$, so $r$ is not positive. 
\end{example}

\begin{example}\label{example:strictinterior2}
    Let $s = (1-x)^3-6x(1-x)^2+3x^2(1-x) = (1-x)(1-4x)^2$. Not all the Bernstein coefficients are positive and the discriminant of $s$ is zero. Nevertheless, the factored form demonstrates nonnegativity of $s$ on the interval.
\end{example}
\noindent While the exact condition discussed in this section has an explicit form, it only characterizes strictly positive polynomials.

A traditional proof of Theorem~\ref{thm:exactcubic} would involve a somewhat tedious enumeration of cases. Instead of that route, we take a more modern computational approach and use a quantifier elimination program to ``derive'' the theorem. See Appendix~\ref{sec:cubic-exact-proof} for a demonstration. 

\subsection{An exact condition with \texorpdfstring{$\mathcal{GB}$}{GB}}

One can also use $\mathcal{GB}$ to provide an exact characterization of nonnegativity. Recall that we have the strict containment $\mathcal{GB} \subset \mathcal{P}$. Perhaps surprisingly, any element in $\mathcal{P}$ is actually the sum of two elements in the smaller set $\mathcal{GB}$, i.e.,
\begin{equation}
\label{eqn:gbplusgb}
\mathcal{GB} + \mathcal{GB} = \mathcal{P}.
\end{equation}
The following proposition shows how to construct $\mathcal{P}$ from two convex subsets of $\mathcal{GB}$ and  yields~\eqref{eqn:gbplusgb} as a corollary.
\begin{prop}
    Let \begin{equation}\begin{aligned}
        S = &\Bigg\{\sum_{i=0}^3s_ib_i^3(x) \, \Bigg | \,  \Big(s_0,\, s_2;\, \sqrt{\frac32}s_1\Big) \in \mathcal{Q}  \textrm{ and } s_3 \geq 0\Bigg\} \textrm{ and }\\
        T = &\Bigg\{\sum_{i=0}^3t_ib_i^3(x) \, \Bigg | \,  \Big(t_1,\, t_3;\, \sqrt{\frac32}t_2\Big) \in \mathcal{Q}  \textrm{ and } t_0 \geq 0\Bigg\}.
        \end{aligned}\end{equation}
        Then, $S + T = \mathcal{P},$ which implies $\mathcal{GB} + \mathcal{GB} = \mathcal{P}$.
\end{prop}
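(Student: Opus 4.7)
My plan is to prove the identity $S+T=\mathcal{P}$ by two inclusions, and then deduce $\mathcal{GB}+\mathcal{GB}=\mathcal{P}$ by sandwiching. The easier direction is $S+T\subseteq\mathcal{P}$: I will first observe that $S\subseteq\mathcal{GB}$ (any $p\in S$ has $p_0,p_2,p_3\geq 0$ and $p_0p_2\geq \tfrac{3}{4}p_1^2$, which forces $|p_1|\leq 2\sqrt{p_0p_2/3}$ and so the two $\mathcal{GB}$ inequalities hold trivially), and symmetrically $T\subseteq\mathcal{GB}$. Then since $\mathcal{GB}\subseteq\mathcal{P}$ by Theorem~\ref{thm:1in2in3} and $\mathcal{P}$ is closed under addition, $S+T\subseteq\mathcal{P}$.

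The main content is the reverse inclusion $\mathcal{P}\subseteq S+T$. The key idea is to use the exact SOCP characterization in Lemma~\ref{lem:nonnegative_socp} to construct the splitting. Given $p\in\mathcal{P}$, pick $c_1,c_2$ satisfying the two second-order cone conditions of \eqref{eqn:nonnegative_socp}. I will then exhibit the explicit decomposition $p=s+t$ given by
\begin{equation}
s=\bigl(p_0,\;\tfrac{c_1}{3},\;p_2-\tfrac{c_2}{3},\;0\bigr), \qquad t=\bigl(0,\;p_1-\tfrac{c_1}{3},\;\tfrac{c_2}{3},\;p_3\bigr),
\end{equation}
written in Bernstein coefficients. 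By construction the coefficients sum to those of $p$. The membership $s\in S$ reduces exactly to $2p_0(p_2-\tfrac{c_2}{3})\geq \tfrac{1}{6}c_1^2$, which is the first SOCP condition from Lemma~\ref{lem:nonnegative_socp} (after rewriting $(\sqrt{3/2}s_1)^2/2=c_1^2/6$), together with the already-verified sign conditions $s_0,s_2\geq 0$ and $s_3=0\geq 0$. Symmetrically, $t\in T$ follows from the second SOCP condition.

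The corollary $\mathcal{GB}+\mathcal{GB}=\mathcal{P}$ is then immediate from the sandwich $\mathcal{P}=S+T\subseteq \mathcal{GB}+\mathcal{GB}\subseteq\mathcal{P}+\mathcal{P}=\mathcal{P}$, using that $S,T\subseteq\mathcal{GB}$ on the left side and that sums of polynomials nonnegative on $[0,1]$ remain nonnegative on the right.

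I do not expect any serious obstacles: the entire proof is essentially a one-line reinterpretation of Lemma~\ref{lem:nonnegative_socp}, where the two decoupled SOCP conditions that characterize $\mathcal{P}$ are matched, one by one, with the defining cone conditions of $S$ and $T$. The only subtle point to verify is that the SOCP conditions automatically give the nonnegativity $p_0, p_2-c_2/3, p_1-c_1/3, p_3\geq 0$ needed for membership in $S$ and $T$, but this is built into the definition of $\mathcal{Q}$.
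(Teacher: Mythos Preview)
Your proposal is correct and follows essentially the same approach as the paper: both directions use $S,T\subseteq\mathcal{GB}\subset\mathcal{P}$ for one inclusion, and for the other use Lemma~\ref{lem:nonnegative_socp} to produce the identical explicit splitting $s=(p_0,\,c_1/3,\,p_2-c_2/3,\,0)$, $t=(0,\,p_1-c_1/3,\,c_2/3,\,p_3)$, with the sandwich argument for $\mathcal{GB}+\mathcal{GB}=\mathcal{P}$. (Minor typo: in your parenthetical you want $(\sqrt{3/2}\,s_1)^2=c_1^2/6$, not divided by $2$; your stated inequality $2p_0(p_2-c_2/3)\geq c_1^2/6$ is nonetheless correct.)
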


\begin{proof}
    Since $S, T \subseteq \mathcal{GB}$, and $\mathcal{GB} \subset \mathcal{P}$, we have $S, T \subset \mathcal{P}$. Furthermore, $\mathcal{P}$ is a cone, so $S + T \subset \mathcal{P}$. 
    For the other direction, suppose that $p \in \mathcal{P}$. We will decompose $p = s + t$ and show $s \in S$ and $t \in T$. Let $c_1, c_2$ be the constants guaranteed to exist for $p$ by Lemma~\ref{lem:nonnegative_socp}. Let the Bernstein coefficients of $s$ be $(p_0, \frac{c_1}{3}, p_2 - \frac{c_2}{3}, 0)$ and let the Bernstein coefficients of $t$ be $(0, p_1 - \frac{c_1}{3}, \frac{c_2}{3}, p_3)$. All that is left is to show that $s \in S$ and $t \in T$. First, $s \in S$ because $(s_0, s_2; \sqrt{\frac{3}{2}}s_1) \in \mathcal{Q}$ if and only if $(p_0, p_2 - \frac{c_2}{3}; \frac{c_1}{\sqrt{6}}) \in \mathcal{Q}$, which is guaranteed by the choice of $c_1$ and $c_2$. The same argument shows $t \in T$. Hence the decomposition is valid. Therefore, $P \subset S + T$.
\end{proof}

\section{Arbitrary degree}\label{sec:generalization}

Now we extend the definitions of $\mathcal{NB}$, $\mathcal{P}$, and eventually $\mathcal{GB}$ to arbitrary degree.
\begin{defn} $\mathcal{P}_d$ (``Positive'') is the set of $p \in \mathbb{R}_d[x]$ such that $p$ is nonnegative on $[0,1]$.
\end{defn}
\begin{defn} $\mathcal{NB}_d$ (``Nonnegative Bernstein'') is the set of $p \in \mathbb{R}_d[x]$ such that $p$ has all nonnegative Bernstein coefficients.
\end{defn} 
In this section, we propose the corresponding generalization of $\mathcal{GB}$. To accomplish this, one can attempt to generalize Lemma~\ref{lem:nonnegative_socp} for arbitrary degree. However, whereas in the cubic case the second order cone conditions are necessary and sufficient for nonnegativity, in the general case this requires instead a semidefinite program. This is too computationally expensive, so we restrict to only \emph{tridiagonal} matrices, which will give an SOCP condition. We then make strategic choices for the decision variables in the SOCP. 

To ease notation, define \begin{equation}\label{eqn:widef}w_i = \begin{cases}\frac{1}{2} &\qquad  1 < i < d-1  \\ 1 &\qquad \text{otherwise}\end{cases}.\end{equation} Recall that $m_i = \frac12 \cdot \frac{(i+1) \cdot (d-i+1)}{i \cdot (d-i)}$. Here is a sufficient condition for nonnegativity written in terms of second order cones and Bernstein coefficients.
\begin{restatable}{lem}{gennonnegativesocp}\label{lem:gen_nonnegative_socp} Let $p\in \mathbb{R}_d[x]$ with Bernstein coefficients $\{p_i\}_{i=0}^{d}$. Let $c_0 = c_d = 0$. If there exist $c_1, \dots, c_{d-1} \in \mathbb{R}$ such that for all $1 \leq i \leq d-1$
\begin{equation}\label{eqn:general-scalar-sufficient}
\begin{aligned}
\left( p_{i-1}-c_{i-1}, \quad p_{i+1} - c_{i+1};\quad c_i\sqrt{\frac{m_i}{w_{i-1}w_{i+1}}} \right) \quad  &\in \quad \mathcal{Q},
\end{aligned}
\end{equation}
then $p\in \mathcal{P}_d$.
\end{restatable}
\begin{proof}
Throughout this proof we use $b_i$ to mean $b_i(x)$ for any $0 \leq x \leq 1$. Constraint \eqref{eqn:general-scalar-sufficient} is equivalent to 
\begin{equation}\label{eqn:wgeneral-scalar-sufficient}
    \left( w_{i-1}(p_{i-1}-c_{i-1}), \quad w_{i+1}(p_{i+1} - c_{i+1});\quad c_i\sqrt{m_i} \right) \quad \in \quad \mathcal{Q}.
\end{equation}
By Lemma~\ref{lemma:bernstein_socp}, for all $1 \leq i \leq d-1$, $(b_{i-1}, b_{i+1}; \frac{1}{\sqrt{m_i}}b_i) \in \mathcal{Q}$.

The inner product between constraint $i$ of Lemma~\ref{lemma:bernstein_socp} and the $i$th constraint of \eqref{eqn:wgeneral-scalar-sufficient} is
\begin{equation}
    w_{i-1}(p_{i-1} - c_{i-1})b_{i-1} + w_{i+1}(p_{i+1} - c_{i+1})b_{i+1} + c_ib_i \geq 0
\end{equation}
by self-duality of the second order cone. Sum all of these results from $1 \leq i \leq d-1$ to get
\begin{equation}
\begin{aligned}\label{eqn:lemma4-allbutspecial}
    0 &\leq \sum_{i=1}^{d-1}\left(w_{i-1}(p_{i-1} - c_{i-1})b_{i-1} + w_{i+1}(p_{i+1} - c_{i+1})b_{i+1} + c_ib_i \right) \\
    &= \sum_{i=0}^kp_ib_i.%
    \end{aligned}%
\end{equation}%
\end{proof}
\noindent 
In Appendix~\ref{sec:altsosproof} we give an alternative proof of the lemma that shows \eqref{eqn:general-scalar-sufficient} is a restricted case of sums of squares, where the Gram matrix is constrained to be tridiagonal.

Unlike in the cubic case, Lemma~\ref{lem:gen_nonnegative_socp} is only a sufficient condition for nonnegativity. To emphasize that it is sufficient but not necessary, consider the following example of a nonnegative polynomial for which \eqref{eqn:general-scalar-sufficient} is not feasible. 

\begin{example} Let $p = (2x-1)^4$. Clearly $p(x) \geq 0$. The Bernstein coefficients are $(1, -1,1, -1, 1)$,
but there are no feasible $c_i$ for \eqref{eqn:general-scalar-sufficient}.
\end{example}
Just as in the cubic case, we can produce sufficient conditions for nonnegativity as follows. Fix an explicit formula for $c_i$ in Lemma~\ref{lem:gen_nonnegative_socp}. If a polynomial satisfies those fixed constraints then $p \in \mathcal{P}_d$. Setting $c_i = 0$ in this procedure gives the defining conditions of $\mathcal{NB}_d$. We propose the choice \begin{equation}\label{eqn:arbitrary_ci}c_i = -\sqrt{\frac{2w_{i-1}w_{i+1}}{m_i}(p_{i-1})_+(p_{i+1})_+},\end{equation} which gives rise to the condition below. Define $p_{i} = 0$ if $i<0$ or $i > d$. 
\begin{defn}
    $\mathcal{GB}_d$ is the set of $p \in \mathbb{R}_d[x]$ for which for every $0 \leq i \leq d$, \begin{equation}\label{eqn:scalar-ci}p_i \geq -\sqrt{\frac{2w_{i-1}w_{i+1}}{m_i}(p_{i-1})_+(p_{i+1})_+} .\end{equation} 
\end{defn}

The following theorem shows that this choice of $c_i$ is not worse than $c_i = 0$. 
\begin{restatable}[Generalized Theorem~\ref{thm:1in2in3}]{thm}{genoneintwointhree} \label{thm:1in2in3_general}
    $\mathcal{NB}_d \subset \mathcal{GB}_d \subset \mathcal{P}_d$.
\end{restatable}
\begin{proof}
For this proof we drop the subscript $d$ but it is understood that all polynomials are degree $d$. It is clear that if every $p_i \geq 0$, then they are all at least some nonpositive number, so $p\in \mathcal{NB}$.

Next, we show that $\mathcal{GB} \subset \mathcal{P}$. Let $p \in \mathcal{GB}_d$. We claim that setting $c_i$ as in \eqref{eqn:arbitrary_ci} for every $i$ makes \eqref{eqn:general-scalar-sufficient} feasible. The second order cone condition is satisfied if and only if both of the following hold:
\begin{enumerate}
    \item $p_{i-1} \geq c_{i-1}$ and $p_{i+1} \geq c_{i-1}$: These are implied by the definition of $p \in \mathcal{GB}$.
    \item $2w_{i-1}w_{i+1}(p_{i-1}-c_{i-1})(p_{i+1}-c_{i+1}) \geq m_i c_i^2$: If either $p_{i-1}$ or $p_{i+1}$ are not positive, then $c_i = 0$. Since the two factors are nonnegative, the inequality holds in that case. Otherwise, when $p_{i-1}, p_{i+1} \geq 0$, we have 
    \begin{equation*}
        2w_{i-1}w_{i+1}(p_{i-1}-c_{i-1})(p_{i+1}-c_{i+1}) \geq 2w_{i-1}w_{i+1}(p_{i-1}p_{i+1}) = m_ic_i^2
        \end{equation*}
        due to the nonpositivity of $c_{i-1}$ and $c_{i+1}$ and the definition of $c_i^2$. 
\end{enumerate}
Therefore, the second order cone condition is feasible with this choice, and so $p \in \mathcal{P}$.
\end{proof}
\section{Polynomial matrices}\label{sec:polynomial-matrices}

The same problems and approaches we considered for scalar polynomials generalize to matrix-valued polynomials, or polynomial matrices.
A (symmetric) polynomial matrix $P(x)$ is given by
\begin{equation}
P(x) = \sum_{i=0}^d b_i(x)P_i ,
\end{equation}
where $P_0, \dots, P_d \in S^n$; that is, the coefficients are symmetric matrices of the same dimensions. Because $P(x)$ is symmetric, it has real eigenvalues for every $x$. Before, we were concerned with nonnegativity of polynomials. The analogous property for polynomial matrices is positive semidefiniteness. We want conditions on the $P_i$ that guarantee that $P(x) \succeq 0$ for all $0 \leq x \leq 1$.

The approaches from the scalar case can be extended to the polynomial matrix case. We define the matrix analogues of $\mathcal{P}$ and $\mathcal{NB}$ as follows:
\begin{defn}
$\mathcal{P}_d^n$ is the set of $n \times n$ symmetric polynomial matrices $P(x)$ whose entries are in $\mathbb{R}_d[x]$ and $P(x) \succeq 0$ for all $0 \leq x \leq 1$.
\end{defn}
\begin{defn}
$\mathcal{NB}_d^n$ is the set of $n \times n$ symmetric polynomial matrices $P(x)$ whose entries are in $\mathbb{R}_d[x]$ and $P_i \succeq 0$ for every $0 \leq i \leq d$. 
\end{defn}

To define sets in the tradeoff space between these extremes, we generalize Lemma~\ref{lem:gen_nonnegative_socp} to the polynomial matrix case. Recall the definitions of $m_i$ and $w_i$ from \eqref{eqn:midef} and \eqref{eqn:widef}. 
\begin{restatable}{lem}{genmatrixnonnegativesocp}\label{lem:gen_matrix_nonnegative_socp} Let $P(x) = \sum_{i=0}^d b_i(x) P_i$ where $P_i \in S^n$. Let $C_0$ and $C_d$ be the zero matrix. If there exist $C_i \in \mathbb{R}^{n\times n}$ for $1 \leq i \leq d-1$ such that for all $1 \leq i \leq d-1$
    \begin{equation}\label{eqn:general_deg_matrix_pmp}
    \begin{aligned}
        \pmat{P_{i-1}-C_{i-1}-C_{i-1}^T & \sqrt{\frac{2m_i}{w_{i-1}w_{i+1}}}C_i \\ \sqrt{\frac{2m_i}{w_{i-1}w_{i+1}}} C_i^T & P_{i+1} - C_{i+1} - C_{i+1}^T} \succeq 0,
    \end{aligned}
    \end{equation}
    then $P\in \mathcal{P}_d^n$.
    \end{restatable}
\begin{proof} Let $\mathbf{1}_n$ be the $n \times n$ matrix of all 1's and $I_n$ be the $n \times n$ identity matrix. By Lemma~\ref{lemma:bernstein_socp}, 
    \begin{equation}
        0 \preceq B_i(x) := \pmat{b_{i-1}(x) & \sqrt{\frac{w_{i-1}w_{i+1}}{2m_i}}b_i(x)\\ \sqrt{\frac{w_{i-1}w_{i+1}}{2m_i}}b_i(x)& b_{i+1}(x)}
    \end{equation}
for every $0 \leq x \leq 1$. So on this domain, $B_i(x) \otimes \mathbf{1}_n \succeq 0$. Suppose the $C_i$ satisfy equation \eqref{eqn:general_deg_matrix_pmp} for $P$. The Schur product theorem implies the Hadamard product of $B_i(x) \otimes \mathbf{1}_n$ with the $i$th matrix in \eqref{eqn:general_deg_matrix_pmp} is positive semidefinite. Summing these Hadamard products over $1 \leq i \leq d-1$ gives 
\begin{equation}
    0 \preceq \pmat{\sum_{i=0}^{d-2} w_k(P_i - C_i - C_i^T) b_i(x) & \sum_{i=1}^{d-1} C_i b_i(x) \\ \sum_{i=1}^{d-1} C_i^T b_i(x) & \sum_{i=2}^{d} w_k(P_i - C_i - C_i^T) b_i(x)}.
\end{equation}
Finally, matrix congruence preserves positive semidefiniteness, so
\begin{equation}
    0 \preceq \pmat{I_n \\ I_n}^T \pmat{\sum_{i=0}^{d-2} w_k(P_i - C_i - C_i^T) b_i(x) & \sum_{i=1}^{d-1} C_i b_i(x) \\ \sum_{i=1}^{d-1} C_i^T b_i(x) & \sum_{i=2}^{d} w_k(P_i - C_i - C_i^T) b_i(x)} \pmat{I_n \\ I_n} = P(x)
\end{equation}
because the terms occurring twice have $w_i = \frac{1}{2}$ and the others have $w_i = 1$. Therefore $P \in \mathcal{P}_d^n$.
\end{proof}
Recall that in the scalar case our choice was given by~\eqref{eqn:arbitrary_ci}. This expression suggests that the choice for $C_i$ may require the matrix version of two natural operations: projection onto the ``positive'' part, and a generalized notion of geometric mean. 
Luckily, both of these are possible. As mentioned earlier, given a matrix $X$, its projection $X_+$ onto the positive semidefinite cone can be computed with an eigenvalue decomposition; details are presented in Appendix~\ref{sec:preliminaries-proofs}. The key property of the scalar geometric mean that we want to extend is: 
given $a,b \geq 0$, their geometric mean is the largest number $x \geq 0$ such that \begin{equation}
    \pmat{a & x \\ x & b} \succeq 0.
\end{equation} For the matrix case, we can ask for the generalized property: given $A, B \succeq 0$ (which will be $(P_{i-1})_+$ and $(P_{i+1})_+$), their ``geometric mean'' should be an $X \succeq 0$ such that 
\begin{equation}\label{eqn:geomeankeyfact}
    \pmat{A & X \\ X & B} \succeq 0 .
    \end{equation}
It turns out that the \emph{matrix geometric mean} of $A$ and $B$ has exactly this property. 
\begin{defn}
    Let $A, B \succ 0$. The \emph{geometric mean} of $A$ and $B$, denoted $A\#B$, is given by 
    \begin{equation}
    A\#B = A^\frac{1}{2}(A^{-\frac{1}{2}}B A^{-\frac{1}{2}})^{\frac{1}{2}}A^{\frac{1}{2}}.
    \label{eq:geomean}
    \end{equation}
    One can extend the definition to all $A,B \succeq 0$ by continuity.
    \label{def:geomean}
\end{defn}
\noindent Although~\eqref{eq:geomean} is not obviously symmetric in $A$ and $B$, it indeed holds that $A\#B = B\#A$. There are many other beautiful properties of the matrix geometric mean, including strong connections to Riemannian geometry; see e.g. \cite{bhatia2009positive, hiai2014introduction, bhatia2006riemannian}. The following fact 
is the key property we need. 
\begin{fact}[Theorem 3.4 of \cite{lawson2001geometric}]\label{fact:key-geom-mean}The geometric mean $A \# B$ is the largest (in the Loewner order) $X \in S^n$ such that \eqref{eqn:geomeankeyfact} holds.
\end{fact}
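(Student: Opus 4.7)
The plan is to establish the fact in three stages: first reduce to the case where $A$ is invertible by continuity, then verify that $A \# B$ itself makes the block matrix positive semidefinite, and finally prove maximality via operator monotonicity of the matrix square root.

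For the reduction, replace $A$ by $A + \epsilon I$ for $\epsilon > 0$, run the argument under the strict positivity assumption, and take $\epsilon \to 0$; both the formula defining $A \# B$ and the Loewner order are preserved in this limit, so the conclusion extends to $A \succeq 0$. Assume now $A \succ 0$. By the Schur complement criterion, $\pmat{A & X \\ X & B} \succeq 0$ is equivalent to $B \succeq X A^{-1} X$. Substituting $X = A^{1/2}(A^{-1/2} B A^{-1/2})^{1/2} A^{1/2}$ and using $A^{1/2} A^{-1} A^{1/2} = I$, the middle factors collapse to give $X A^{-1} X = A^{1/2}(A^{-1/2} B A^{-1/2}) A^{1/2} = B$. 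Hence $A \# B$ satisfies the block inequality, in fact with equality in the Schur complement.

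For maximality, let $Y \in S^n$ be a symmetric matrix such that $\pmat{A & Y \\ Y & B} \succeq 0$, so $B \succeq Y A^{-1} Y$. Setting $C = A^{-1/2} Y A^{-1/2}$ and $D = A^{-1/2} B A^{-1/2}$, conjugation by $A^{-1/2}$ turns this into $D \succeq C^2$. Operator monotonicity of the square root on positive semidefinite matrices then yields $D^{1/2} \succeq (C^2)^{1/2} = |C|$, where $|C|$ denotes the symmetric matrix sharing the eigenvectors of $C$ but with absolute-valued eigenvalues. Since $|C| - C \succeq 0$ (its eigenvalues are the nonnegative numbers $|\lambda_i| - \lambda_i$), we obtain $D^{1/2} \succeq C$, and conjugating back by $A^{1/2}$ produces $A \# B \succeq Y$.

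The main obstacle is the implication $D \succeq C^2 \Rightarrow D^{1/2} \succeq C$. The analogous statement for squaring is \emph{false} in the noncommutative setting, so one cannot simply ``take square roots''; the argument pivots on operator monotonicity of $t \mapsto t^{1/2}$, a classical but nontrivial fact provable via the integral representation $t^{1/2} = \frac{1}{\pi} \int_0^\infty \frac{t}{s+t}\, \frac{ds}{\sqrt{s}}$ or L\"owner's theorem. Once that tool is imported from matrix analysis, the remainder is routine Schur-complement bookkeeping together with the scalar observation $|\lambda| \geq \lambda$.
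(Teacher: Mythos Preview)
The paper does not give its own proof of this statement: it is imported verbatim as a cited fact (Theorem~3.4 of \cite{lawson2001geometric}) and used as a black box in the proof of Theorem~\ref{thm:arbitrary_general_inclusions}. So there is nothing in the paper to compare your argument against.

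Your proof is correct and self-contained modulo the operator monotonicity of $t\mapsto t^{1/2}$, which you rightly flag as the nontrivial external ingredient. The Schur-complement reduction to $D\succeq C^2$, the identification $(C^2)^{1/2}=|C|$ for symmetric $C$, and the trivial inequality $|C|\succeq C$ are all sound. The continuity reduction to $A\succ 0$ also goes through: if $Y$ is feasible for $(A,B)$ then it is feasible for $(A+\epsilon I,B)$, so $(A+\epsilon I)\#B\succeq Y$, and the limit $\epsilon\to 0$ recovers $A\#B\succeq Y$ by the continuous extension in Definition~\ref{def:geomean}. This is essentially the standard proof one finds in the matrix-analysis literature (e.g.\ Bhatia's \emph{Positive Definite Matrices}), and it supplies what the paper chose to outsource.
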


With the correct notions of positive projection and geometric mean in place, we can now define $\mathcal{GB}_d^n$ as a natural generalization of $\mathcal{GB}_d$.
Let $\mathbf{0}_n$ be the $n\times n$ zero matrix. Let $P_{i} = \mathbf{0}_n$ if $i<0$ or $i > d$. We choose  
\begin{equation}\label{eqn:cichoicematrix}
    C_i = -\sqrt{\frac{w_{i-1}w_{i+1}}{2m_i}}((P_{i-1})_+\#(P_{i+1})_+),
\end{equation}
which appropriately generalizes \eqref{eqn:cubic_c1c2} and \eqref{eqn:scalar-ci}.
\begin{defn}
     $\mathcal{GB}_d^n$ is the set of $n\times n$ polynomial matrices $\sum_{i=0}^db_i(x)P_i$ such that for all $0 \leq i \leq d$, \begin{equation}\label{eqn:GBdn} P_i \succeq -\sqrt{\frac{2w_{i-1}w_{i+1}}{{m_i}}}\left((P_{i-1})_+ \# (P_{i+1})_+\right).\end{equation}
     \label{def:GBmatrix}
\end{defn}

Finally, we show this definition is in the trade space between $\mathcal{NB}_d^n$ and $\mathcal{P}_d^n$, and this theorem implies Theorems~\ref{thm:1in2in3} and \ref{thm:1in2in3_general}.
\begin{restatable}{thm}{arbitrarygeneralinclusions}\label{thm:arbitrary_general_inclusions}
    $\mathcal{NB}_d^n \subset \mathcal{GB}_d^n \subset \mathcal{P}_d^n$.\end{restatable}
    \begin{proof}
        First suppose that $P \in \mathcal{NB}_d^n$. Since $P_i \succeq 0$, if $M \in S^n$ has nonpositive eigenvalues, then $P_i - M \succeq 0$. In particular, $-1$ times the geometric mean of $P_{i-1}$ and $P_{i+1}$ has nonpositive eigenvalues. Therefore $P \in \mathcal{GB}_d^n$. 
        
        Next we argue that if $P \in \mathcal{GB}_d^n$ then choosing $C_i$ as in \eqref{eqn:cichoicematrix} is feasible for \eqref{eqn:general_deg_matrix_pmp}. If either $P_{i-1}$ or $P_{i+1}$ is indefinite, then the only thing to check is that both $P_{i-1} - C_{i-1} - C_{i-1}^T$ and $P_{i+1} - C_{i+1} - C_{i+1}^T$ are positive semidefinite (because the off-diagonal entries are 0). This is exactly the condition that $P \in \mathcal{GB}$. On the other hand, if both are positive semidefinite, then we need to show
        \begin{equation}\label{eqn:GBdecomp}
        \pmat{P_{i-1}  & \sqrt{\frac{2m_i}{w_{i-1}w_{i+1}}} C_i \\ \sqrt{\frac{2m_i}{w_{i-1}w_{i+1}}}C_i^T & P_{i+1}} + \pmat{- C_{i-1} - C_{i-1}^T  & 0 \\ 0 & - C_{i+1} - C_{i+1}^T} \succeq 0.
        \end{equation} 
        By Fact~\ref{fact:key-geom-mean}, \begin{equation} \pmat{P_{i-1}  & -\sqrt{\frac{2m_i}{w_{i-1}w_{i+1}}}C_i \\ -\sqrt{\frac{2m_i}{w_{i-1}w_{i+1}}}C_i^T &P_{i+1}} \succeq 0.
        \end{equation} This means for all $x, y \in \mathbb{R}^n$, 
        \begin{equation}\begin{aligned}
        0 &\leq \pmat{x \\y}^T\pmat{P_{i-1}  & -\sqrt{\frac{2m_i}{w_{i-1}w_{i+1}}}C_i \\ -\sqrt{\frac{2m_i}{w_{i-1}w_{i+1}}}C_i^T &P_{i+1}}\pmat{x \\y}\\ &= \pmat{x \\-y}^T\pmat{P_{i-1}  & \sqrt{\frac{2m_i}{w_{i-1}w_{i+1}}}C_i \\ \sqrt{\frac{2m_i}{w_{i-1}w_{i+1}}}C_i^T &P_{i+1}}\pmat{x \\-y},
        \end{aligned}
        \end{equation}
        so the first matrix in \eqref{eqn:GBdecomp} is positive semidefinite. The second matrix is positive semidefinite because the block diagonals are geometric means, which are positive semidefinite. Therefore the sum is as well. Hence $P \in \mathcal{P}_d^n$. 
        \end{proof}

\section{Numerical experiments}
\label{sec:numerical}
For every application that membership in $\mathcal{NB}$ is used as a sufficient condition for nonnegativity, 
it could be worth considering how testing membership in $\mathcal{GB}$ instead may provide a benefit. Our examples deal with certifying a lower bound $\delta$ on a polynomial $p$, or proving $p - \delta$ is nonnegative. We compare how well $\mathcal{NB}$ and $\mathcal{GB}$ work as termination criteria for the subdivision method. 

Recall the subdivision method described in Section~\ref{sec:preliminaries}. The computation required by De Casteljau's algorithm in Step 2 exceeds the cost of checking $p \in S$ in Step 1 when $S = \mathcal{NB}$ or $\mathcal{GB}$, so saving on the number of subdivisions can provide a significant advantage. We therefore count the number of subdivisions required when using the two different nonnegativity criteria. 

\subsection{Varying root locations}
The first experiment considers the quadratic polynomials $(x-t)^2$, written in the cubic Bernstein basis. The rationale of using such a simple model is to gain a fundamental understanding of how the locations of roots affect the subdivision method. Generically, polynomials are locally quadratic near local minima, thus by studying subdivisions of quadratics one can hope to learn about the behavior of higher degree polynomials with several local minima.

Given a small fixed value of $\delta>0$, in Figure~\ref{fig:numsubsquad} we report the number of subdivisions required to prove $(x-t)^2\geq -\delta$. The fractal pattern is related to the binary expansion of the root $t$. Both methods do better when a subdivision occurs near the minimizer. For example, $(x- \frac{1}{2})^2$ only takes one subdivision because when we subdivide $[0,1]$, the first subdivision split is at $x = \frac12$. When $\mathcal{NB}$ is the termination criterion, a small change in the location of the minimizer may have a big effect on the number of subdivisions required. On the other hand, $\mathcal{GB}$ is less sensitive to the exact location of the minimizer. We can observe from Figure~\ref{fig:numsubsquad} that the number of subdivisions required with $\mathcal{GB}$ is never bigger than the number required with $\mathcal{NB}$.

\medskip

\begin{figure}[htbp]
    \centering
    \resizebox{\textwidth}{!}{\begin{tikzpicture}
    \begin{axis}[
      xlabel=$t$,
      ylabel=Number of subdivisions,
      ymin = -.2,
      grid = both,
      xmin = -.01, xmax = 1.01,
      width = \textwidth, 
      height = .5\textwidth, 
      legend style={at={(0.5,0.0)},anchor=south,
                  font=\footnotesize,
                  },
                  ]
    \addplot[color = blue] table[x=ts, y=nsubs, col sep=comma] {sections/data/numsubsquadratic1e-4.csv};
    \addlegendentry{$\mathcal{NB}$}
    \addplot[color = red, dashed, thick] table[x=ts, y=gsubs, col sep=comma] {sections/data/numsubsquadratic1e-4.csv};
    \addlegendentry{$\mathcal{GB}$}
    \end{axis}
    \end{tikzpicture}}\vspace{-.5cm}
    \caption{Number of subdivisions required to prove $(x-t)^2 \geq -\delta$ for $\delta = 10^{-4}$ with the subdivision method and different nonnegativity criteria. When $\mathcal{GB}$ is the nonnegativity criterion, the number of subdivisions is less sensitive to whether a subdivision location exactly coincides with $t$.}
    \label{fig:numsubsquad}
\end{figure}
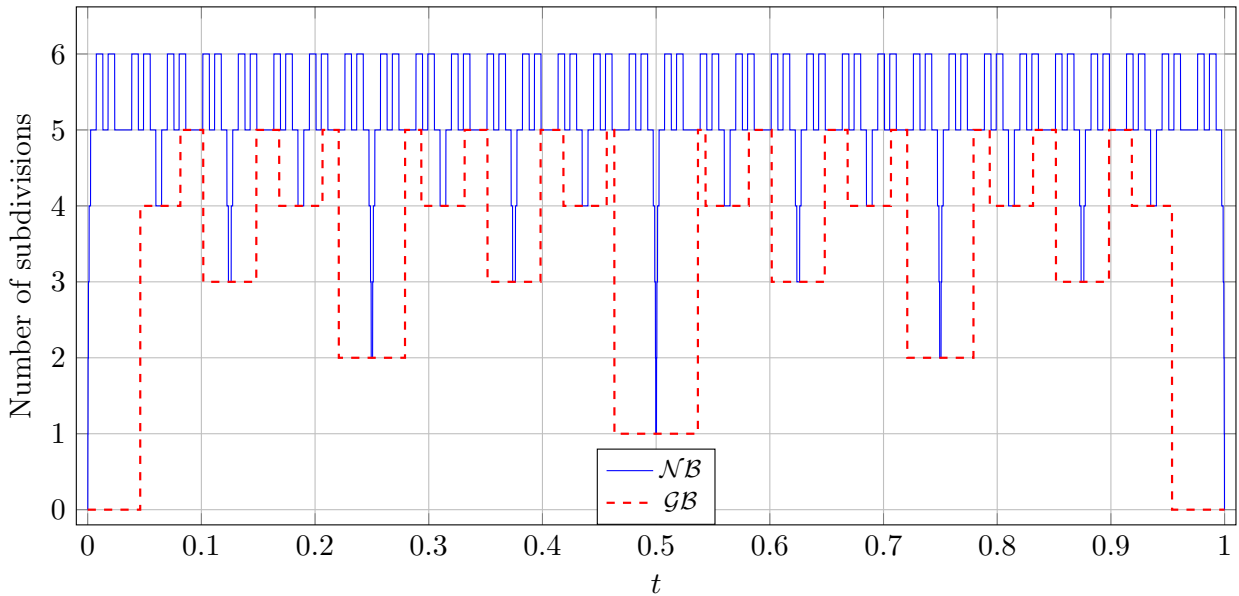

The same phenomenon is presented in a different way in Figure~\ref{fig:avgsubsquad}. We show the percentage of roots $t$ on $[0,1]$ for which fewer than $1,2,3,4,5,$ and 6 subdivisions are required with each of the nonnegativity criteria to prove $(x-t)^2 \geq -\delta$ for a small $\delta > 0$. 
We see that a nontrivial portion of the ensemble requires just a couple subdivisions when we check nonnegativity with $\mathcal{GB}$, whereas checking with $\mathcal{NB}$ frequently requires the maximum number of subdivisions.

\smallskip
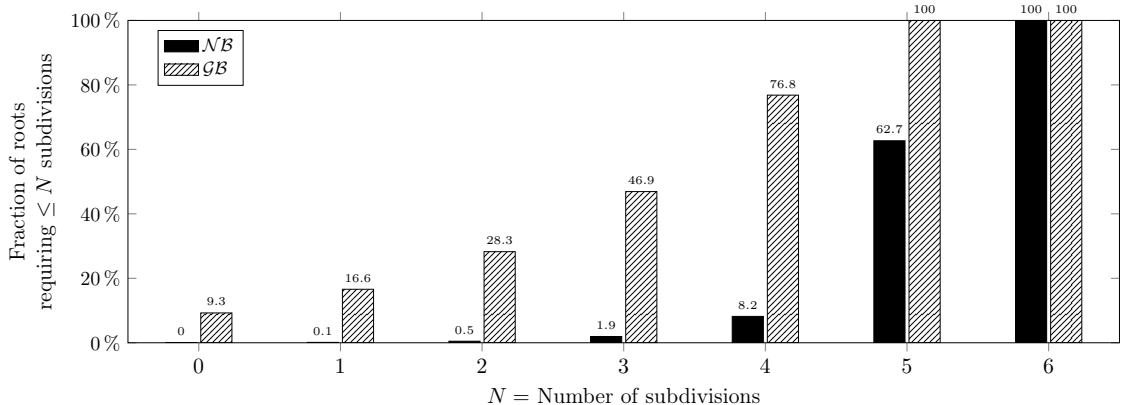
\begin{figure}[htbp]
    \centering
    \begin{filecontents}{subdiv-avg.dat}
    numsubdivs	NB	GB
    0	0.030008	9.256552
    1	0.12011000000000001	16.598612
    2	0.481742	28.262324
    3	1.948476	46.90262
    4	 8.182101999999999	76.82164999999999
    5	62.666666000000006	100.00
    6	100.00	100.00
\end{filecontents}
\resizebox{.9\textwidth}{!}{\begin{tikzpicture}
    \begin{axis}[
        bar width=0.22,
        ybar,
        x = 2.5 cm,
        xtick = data,
        enlarge x limits={abs=0.5},
        ymin=0,
        ymax=100,
        ylabel style={align=center},
        ytick distance=20,
        yticklabel style={font=\small},
        yticklabel=\pgfmathprintnumber{\tick}\,$\%$,
        nodes near coords,
        nodes near coords style={font=\tiny,
                                 /pgf/number format/.cd,
                                 fixed,
                                 precision=1,
                                 },
        legend style={legend pos=north west,
                      cells={anchor=west},
                      font=\footnotesize,
                      },
      xlabel= {$N = $ Number of subdivisions},
      ylabel={Fraction of roots\\ requiring $\leq N$ subdivisions},
      ymin = 0,
      area legend]

    \addplot[fill = black] table[x=numsubdivs,y=NB] {subdiv-avg.dat};
    \addlegendentry{$\mathcal{NB}$}
    \addplot[pattern=north east lines] table[x=numsubdivs,y=GB] {subdiv-avg.dat};
    \addlegendentry{$\mathcal{GB}$}
    \end{axis}
    \end{tikzpicture}}
    \caption{Percentage of instances for which the subdivision method requires no more than $N$ subdivisions to prove $(x-t)^2 \geq -\delta$  (with $\delta = 10^{-4}$, and $t$ uniformly distributed in $[0,1]$). Even with just a few subdivisions, a large number of the bounds can be proven with the subdivision method and $\mathcal{GB}$ as a nonnegativity condition.}
    \label{fig:avgsubsquad}
\end{figure}

\subsection{Varying matrix size}
The second experiment compares different termination criteria in the subdivision method for matrix-valued polynomials of varying sizes whose eigenvalues approach 0 for many $x$. In this case, the subdivisions $P^1(x)$ and $P^2(x)$ are defined so that
\begin{equation}\label{eqn:subdivision-def-matrix}
    P(x) = \begin{cases} P^1(2x) \qquad  &0 \leq x \leq \frac{1}{2}\\
     P^2(2x-1) \qquad &\frac{1}{2} \leq x \leq 1.\end{cases}
\end{equation}
For each $n$, we sample 100 $n\times n$ matrices of the form
\begin{equation}\label{eqn:random_matrix_model}
    T^T \pmat{\rho_1(x) & 0 & \cdots & 0\\
                0 & \rho_2(x) & \cdots & 0\\
                0 & 0 & \ddots & 0\\
                0 & 0 & \cdots & \rho_n(x)} T,
\end{equation}
where the entries of $T$ are independently and identically distributed Gaussian random variables and all $\rho(x)$ are random cubic polynomials nonnegative on the interval. The random congruence transformation means that \eqref{eqn:random_matrix_model} is still always positive semidefinite but implies the eigenvalues are not polynomials in $x$. An example of the eigenvalues for one such $10 \times 10$ matrix are given in Figure~\ref{fig:example_eigs_10x10}.
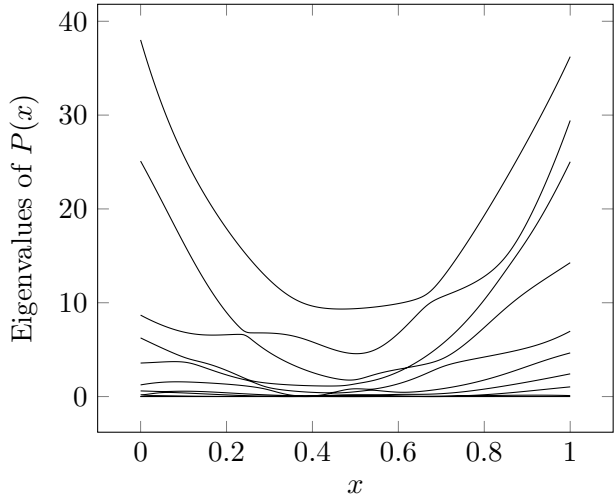
\begin{figure}[htbp]
    \centering
    \begin{tikzpicture}
        \begin{axis}[
          xlabel=$x$,
          ylabel=Eigenvalues of $P(x)$]
        \addplot+[smooth, black, mark = none] table {sections/data/random_eigs_10x10.dat};
        \end{axis}
        \end{tikzpicture}
        \caption{Eigenvalues of a random $10 \times 10$ polynomial matrix sampled according to \eqref{eqn:random_matrix_model}. Notice that the eigenvalues are small for many $x$.}
    \label{fig:example_eigs_10x10}
\end{figure}

We run the subdivision algorithm on the matrix polynomials to prove $P(x) \succeq -\delta I$ using $\mathcal{NB}$ and $\mathcal{GB}$ as nonnegativity criteria. For each matrix we compute the difference in the number of subdivisions required with each criteria. The average and standard deviations across all 100 matrices are plotted in Figure~\ref{fig:matrix_subdiv_demo2}. As the matrices get larger, the average number of subdivisions saved with $\mathcal{GB}$ increases. 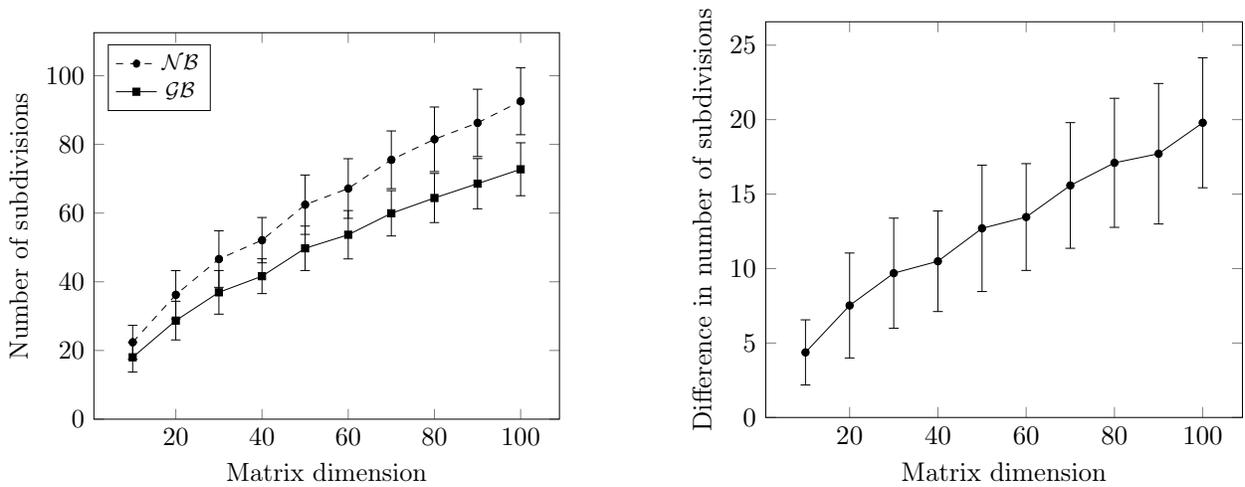
\begin{figure}[htbp]
    \centering\begin{subfigure}{.45\textwidth}
    \resizebox{\textwidth}{!}{\begin{tikzpicture}
        \begin{axis}[
          xlabel=Matrix dimension,
          ylabel=Number of subdivisions,
          ymin = 0,
          legend style={legend pos=north west,
                      font=\footnotesize,
                      },]

        \addplot+[forget plot, mark options={black, scale=0.75}, black, only marks, error bars/.cd, 
        y fixed,
        y dir=both, 
        y explicit] table[x=N,y=NB:,y error=NBstd] {sections/data/matrix_subdiv4.dat};
        \addplot+[mark options={black, scale=0.75}, black, dashed, error bars/.cd, 
        y fixed,
        y dir=both, 
        y explicit] table[x=N,y=NB:] {sections/data/matrix_subdiv4.dat};

        \addlegendentry{$\mathcal{NB}$}
        \addplot+[mark options={black, scale=0.75}, black, error bars/.cd, 
        y fixed,
        y dir=both, 
        y explicit] table[x=N,y=GB:,y error=GBstd] {sections/data/matrix_subdiv4.dat};
        \addlegendentry{$\mathcal{GB}$}

        \end{axis}

        \end{tikzpicture}}
    \begin{subfigure}{.45\textwidth}
        \resizebox{\textwidth}{!}{\begin{tikzpicture}
      \begin{axis}[
        xlabel=Matrix dimension,
        ylabel=Difference in number of subdivisions,
        ymin = 0,
        legend style={legend pos=north west,
                    font=\footnotesize,
                    },]

      \addplot+[mark options={black, scale=0.75}, black, error bars/.cd, 
      y fixed,
      y dir=both, 
      y explicit] table[x=N,y=Diff,y error=Diffstd] {sections/data/matrix_subdiv4.dat};
      \end{axis}

      \end{tikzpicture}}\end{subfigure}
    \caption{Number of subdivisions required with $\mathcal{NB}$ or $\mathcal{GB}$ as nonnegativity criteria to prove $P(x) \succeq \delta I$ with $\delta = 10^{-4}$ for 100 random matrices sampled according to \eqref{eqn:random_matrix_model}. The average number of subdivisions saved with $\mathcal{GB}$ instead of $\mathcal{NB}$ increases as the matrices get bigger. The percentage of subdivisions saved is roughly 20\%. Vertical bars are one standard deviation of counts (left) or differences (right).}
    \label{fig:matrix_subdiv_demo2}
\end{figure}

\section{Conclusion}
We developed novel simple and explicit conditions to certify nonnegativity of Bernstein polynomials. The new tests better balance the tradeoffs between exact but expensive conditions, and the commonly used test based on nonnegative Bernstein coefficients. The method is based on making explicit choices for the decision variables in the SDP/SOCP characterizations of nonnegativity, bypassing the need to solve them numerically.

There are several related open areas for potential further work. An open question is whether there are other reasonable low-complexity choices for the decision variables (that may violate the hypotheses of Proposition~\ref{prop:tight} or that may not satisfy the conditions of Theorem~\ref{thm:1in2in3}). Generalizing the basic idea of Proposition~\ref{prop:tight} to higher degrees and the polynomial matrix case is also future work. Finally, it would be interesting to do a more comprehensive evaluation of how well these techniques perform in different applied settings.

\bibliography{references}
\bibliographystyle{plain}
\appendix
\section{Projections, square roots, and geometric means }\label{sec:preliminaries-proofs}

The orthogonal projection $X_+$ of a symmetric matrix $X$ onto the positive semidefinite cone is defined as
\begin{equation}
   X_+ =  \underset{{Z \succeq 0}}{\textrm{argmin}}\, \Vert X - Z \Vert_\text{Fro}^2,
\end{equation}
where $\Vert X \Vert_\text{Fro}$ is the Frobenius norm of $X$. 
This projection can be computed with the standard algorithm: 
\begin{enumerate}
    \item Find an eigendecomposition of $X = PDP^{T}$ with $D$ diagonal.
    \item Define $(D_+)_{ij} = (D_{ij})_+$ for every $1 \leq i,j \leq n$.
    \item Return $X_+ = PD_+P^{T}$.
\end{enumerate}

To compute the matrix geometric mean using  Definition~\ref{def:geomean}, we need the matrix square root. Recall that the square root of a diagonal matrix is the diagonal matrix whose entries are the square roots of the original matrix. Then the matrix square root of any positive semidefinite matrix $X^{\frac{1}{2}}$ can be computed as $PD^{\frac{1}{2}}P^{T}$ where $PDP^{T}$ is an eigendecomposition of $X$. We also remark that there are other, more efficient algorithms to compute the matrix geometric mean; see e.g.~\cite{Iannazzo}.

\section{Alternative approach to Lemma~\ref{lem:gen_nonnegative_socp}}\label{sec:altsosproof}

We note in Section~\ref{sec:generalization} that there is an alternative proof of Lemma~\ref{lem:gen_nonnegative_socp}. We sketch this proof here.

\gennonnegativesocp*
\begin{proof}[Alternative proof sketch of Lemma~\ref{lem:gen_nonnegative_socp}] Suppose the degree of $p$ is $2d+1$. Let $M_1$ and $M_2$ be tridiagonal matrices with 
    \begin{equation}
    \begin{aligned}
        M_1 = \pmat{
        a_0p_0 & f_1c_1 & 0 & 0 & \cdots  & 0\\
        f_1c_1 & a_2(p_2 - c_2) & f_3c_3 & 0& \cdots &0\\
        0 & f_3c_3 & a_4(p_4 - c_4) & f_5c_5 & \cdots & 0\\
        \vdots & \vdots & \vdots & \vdots & \ddots & \vdots \\
        0 & 0 & 0 & 0 & \cdots & a_{2d}(p_{2d} - c_{2d})}, \\M_2 = \pmat{
        a_1(p_1-c_1) & f_2c_2 & 0 & 0 & \cdots  & 0\\
        f_2c_2 & a_3(p_3 - c_3) & f_4c_4 & 0& \cdots &0\\
        0 & f_4c_4 & a_5(p_5 - c_5) & f_6c_6 & \cdots & 0\\
        \vdots & \vdots & \vdots & \vdots & \ddots & \vdots \\
        0 & 0 & 0 & 0 & \cdots & a_{2d+1} p_{2d+1}},
        \end{aligned}
    \end{equation}
    where $a_{2k} = \frac{\binom{2d+1}{2k}}{\binom{d}{k}^2}$, $a_{2k+1} = \frac{\binom{2d+1}{2k+1}}{\binom{d}{k}^2}$, $f_{2k+2} = \frac{\binom{2d+1}{2k+2}}{2\binom{d}{k}\binom{d}{k+1}}, f_{2k+1} = \frac{\binom{2d+1}{2k+1}}{2\binom{d}{k}\binom{d}{k+1}}$.
    Let $\vec{b} = [b_0(x), \dots, b_d(x)]^T$. Then one can show
    \begin{equation}\label{eqn:generalmarkovlukacs}
       p = (1-x) \vec{b}^{T} M_1 \vec{b} +x \vec{b}^{T} M_2 \vec{b} .
    \end{equation}
Therefore, if $M_1$ and $M_2$ are positive semidefinite, then $p$ is nonnegative on the interval. If \eqref{eqn:general-scalar-sufficient} is feasible, then $M_1, M_2\succeq 0$, because they can be constructed by interlacing positive semidefinite $2 \times 2 $ blocks. Interlacing $2 \times 2$ matrices here means to form a tridiagonal matrix by adding the $(1,1)$ entry of each subsequent $2 \times 2$ matrix to the $(2,2)$ entry of the preceding $2 \times 2$ matrix along the diagonal. What remains is to show \eqref{eqn:general-scalar-sufficient} implies the required $2 \times 2$ matrices are positive semidefinite. The case when $d$ is even is similar. In both cases, the condition is equivalent to certain tridiagonal matrices being positive semidefinite.
    \end{proof}
\section{Maximality of \texorpdfstring{$\mathcal{GB}$}{GB}}\label{sec:p2proofs}

This section contains the proof of the following proposition.

\tight*
\begin{proof}[Proof of Proposition~\ref{prop:tight}]

We enumerate the inequalities equivalent to the second order cone conditions  \eqref{eqn:nonnegative_socp} so that we can refer to each of them individually. They are:
\begin{align}
    4p_0(3p_2-c_2) &\geq c_1^2 \label{eqn:p1constraintquad}\\ 3p_2-c_2 &\geq 0 \label{eqn:p1constraintlin}\\
    4p_3(3p_1-c_1) &\geq c_2^2\label{eqn:p2constraintquad} \\3p_1 - c_1 &\geq 0 \label{eqn:p2constraintlin} .
\end{align}
For this proof, $c_1$ and $c_2$ are explicit functions of $p$. We let $c_1 = f(p_0,p_2)$, $c_1^*=f^*(p_0,p_2)$, $c_2 = f(p_3,p_1)$ and $c_2^* = f^*(p_3,p_1)$.

The proof structure is as follows. Let $p \in \mathcal{GB} \setminus \mathcal{NB}$ for which $c_1 \neq c_1^*$ and $c_2 \neq c_2^*$. 
If $p \not \in S(f)$, we are done, because we have a point in $S(f^*)$ not in $S(f)$. First, we consider $c_2 > c_2^*$ and produce a point $q \in S(f^*)$ and $q \not \in S(f)$. A symmetric argument produces such a $q$ if $c_1> c_1^*$. Second, we assume that $c_1 < c_1^*$ and $c_2 < c_2^*$, and then show the existence of a sequence $\{q^k\}_{k=1}^\infty$ for which eventually $q^k \in S(f^*)$ but $q^k \not \in S(f)$. 
If $p \in \mathcal{GB} \setminus \mathcal{NB}$, then exactly one of $p_1$ and $p_2$ is negative. We treat these possibilities as two subcases.
\paragraph{$\mathbf{c_2 > c_2^*:}$} 
\begin{itemize}
\item[$p_2 < 0$] Let $q = (p_0, p_1, \frac{1}{3}c_2^*, p_3)$. 
The value $c_2 = f(q_3, q_1) = f(p_3, p_1) $ remains unchanged. Constraint \eqref{eqn:p1constraintlin} requires that $3q_2 - f(q_3, q_1) = c_2^* - c_2 \geq 0$, which is violated because $c_2 > c_2^*$. Therefore $q \not \in S(f)$. On the other hand, $3q_2 - f^*(q_3, q_1) = c_2^* - c_2^* = 0$. Furthermore $f^*(q_0,q_2) = 0$ (since $c_2^* \leq 0$), so \eqref{eqn:p1constraintquad} is satisfied too. Because $c_1^* = 0$ and $f^*(p_3,p_1) = f^*(q_3,q_1)$, \eqref{eqn:p2constraintquad} and \eqref{eqn:p2constraintlin} are still satisfied for $q$, so $q \in S(f^*)$.

\item[$p_2 > 0$] Let $q = (\frac{3p_1^2}{4p_2}, p_1, p_2, p_3)$. 
When $p_2 >0$, $p_1 <0$, so $f^*(p_1,p_3) = 0$. Therefore $f(p_1,p_3) > 0$. Suppose for contradiction $q \in S(f)$. If \eqref{eqn:p1constraintquad} were satisfied for $q$, then $9p_1^2 > c_1^2$. Furthermore, \eqref{eqn:p2constraintlin} would imply $3p_1 - c_1 \geq 0$, or $3p_1 \geq c_1$. Since $p_1 < 0$, this implies $9p_1^2 \leq c_1^2$. This contradicts what we showed above. Therefore $q \not \in S(f)$. However $f^*(q_3,q_1) = 0$ and $f^*(q_0, q_2) = 3p_1$, satisfies all the constraints, so $q \in S(f^*)$. 
\end{itemize}

\paragraph{$\mathbf{c_2< c_2^*:}$}  Suppose $f(p_3, p_1) < f^*(p_3, p_1)$ and $f(p_0,p_2) < f^*(p_0,p_2)$. Without loss of generality suppose $p_2 < 0$. 
Let $q^k = (\frac{1}{k}, p_1, \frac{1}{3}f^*(p_1,p_3), p_3)$. 

We argue that for some sufficiently large $k$, $q^k \not \in S(f)$. Notice that $f(q_3,q_1)$ does not change as $k$ increases. If $q^k \in S(f)$ then \eqref{eqn:p1constraintquad} implies $4q_0(3q_2 - f(p_3,p_1)) \geq f(p_0,p_2)^2$, or substituting, $\frac{4}{k}(f^*(p_3,p_1) - f(p_3,p_1)) \geq f(\frac{1}{k}, \frac{1}{3}f^*(p_3,p_1))^2$, so 
\begin{equation}\label{eqn:tightlb}f\Big(\frac{1}{k}, \frac{1}{3}f^*(p_3,p_1)\Big) \geq -\sqrt{\frac{4}{k}(f^*(p_3,p_1) - f(p_3,p_1))}. \end{equation}
At the same time \eqref{eqn:p2constraintquad} would imply
$c_1 \leq 3q_1 - \frac{c_2^2}{4q_3}$, or \begin{equation}\label{eqn:tightub}f\Big(\frac{1}{k}, \frac{1}{3}f^*(p_3,p_1)\Big) \leq 3p_1 - \frac{1}{4p_3}f(p_3,p_1)^2 < 0.\end{equation}
The second inequality holds because $f(p_3, p_1) < f^*(p_3, p_1)$, and the middle expression vanishes when substituting $f^*(p_3,p_1)$ for $f(p_3,p_1)$. For some large enough $k$, the lower bound \eqref{eqn:tightlb} approaches zero and will exceed the constant nonpositive upper bound provided by \eqref{eqn:tightub}.

Finally, we show that for every $k$, $q^k \in S(f^*)$. As $f^*(p_3,p_1) \leq 0$, for every $k$, $f^*(q^k_0,q^k_2) = 0$. Therefore \eqref{eqn:p1constraintquad} and \eqref{eqn:p1constraintlin} are satisfied since $\frac{1}{k} > 0$ and $3(\frac{1}{3}f^*(p_3,p_1)) - f^*(p_3,p_1) = 0$. If $p \in S(f^*)$, then \eqref{eqn:p2constraintquad} and \eqref{eqn:p2constraintlin} must be satisfied for each $q^k$ as well because nothing in those conditions has changed. Hence $q^k \in S(f^*)$.
\end{proof}

\section{Quantifier elimination}\label{sec:cubic-exact-proof}

Theorem~\ref{thm:exactcubic} can be verified automatically using decision algebra. 
Specifically, quantifier elimination algorithms are systematic procedures to eliminate quantifiers in first-order formulas over the real field; see e.g.~\cite{CavinessJohnson}. A well-known implementation is \texttt{QEPCAD} \cite{brown2003qepcad}. We demonstrate how to use \texttt{QEPCAD} to eliminate the quantifier in the logical expression defining $p \in \mathcal{P}^\circ$,
\begin{equation}
    \forall{x} 
    (((0\leq x) \land (x \leq 1)) \Longrightarrow p_0(1-x)^3 + 3 p_1x(1-x)^2 + 3p_2x^2(1-x) + p_3x^3 > 0) ,
  \end{equation}
  to get 
  
  \begin{equation}p_0 >0 \land p_3 > 0 \land (D(p) < 0 \lor (p_1 > 0 \land p_2>0)).
  \end{equation}

\smallskip

  
   \begin{tiny} 
   \begin{lstlisting}[multicols=2, caption={Verification of Theorem~\ref{thm:exactcubic}}]
    $ ./qepcad +N80000000
=======================================================
                Quantifier Elimination                 
                          in                           
            Elementary Algebra and Geometry            
                          by                           
      Partial Cylindrical Algebraic Decomposition      
                                                       
               Version B 1.65, 10 May 2011
                                                       
                          by                           
                       Hoon Hong                       
                  (hhong@math.ncsu.edu)                
                                                       
With contributions by: Christopher W. Brown, George E. 
Collins, Mark J. Encarnacion, Jeremy R. Johnson        
Werner Krandick, Richard Liska, Scott McCallum,        
Nicolas Robidoux, and Stanly Steinberg                 
=======================================================
Enter an informal description  between '[' and ']':
[Eliminate the quantifiers in P^circ description]
Enter a variable list:
(p0,p1,p2,p3,x)
Enter the number of free variables:
4
Enter a prenex formula:
(A x)[ [0 <= x /\ x <= 1] ==> (1-x)^3 p0 + 3 p1 (1-x)^2 x + 3 p2 (1-x) x^2 + p3 x^3 > 0 ].


=======================================================

Before Normalization >
go

Before Projection (x) >
proj-op (m,m,h,h)

Before Projection (x) >
finish

An equivalent quantifier-free formula:

p0 > 0 /\ p3 > 0 /\ [ p0^2 p3^2 - 6 p0 p1 p2 p3 + 4 p1^3 p3 + 4 p0 p2^3 - 3 p1^2 p2^2 > 0 \/ [ p1 > 0 /\ p2 > 0 ] ]


=====================  The End  =======================

--------------------------------------------------- --------------------------
0 Garbage collections, 0 Cells and 0 Arrays reclaimed, in 0 milliseconds.
16973738 Cells in AVAIL, 40000000 Cells in SPACE.

System time: 1453 milliseconds.
System time after the initialization: 1268 milliseconds.
---------------------------------------------------- -------------------------
     \end{lstlisting}
    \end{tiny}

  
    The projection option we use tells \texttt{QEPCAD} to perform Hong's projection algorithm \cite{hong1990improvement}. The default projection algorithm in \texttt{QEPCAD} sometimes gives warnings even though they can be safely ignored, which is the case with our formula. 

\end{document}